\numberwithin{equation}{section}
\theoremstyle{plain}
\newtheorem{theorem}{Theorem}[section]
\newtheorem{prop}[theorem]{Proposition}
\newtheorem{lemma}[theorem]{Lemma}
\theoremstyle{remark}
\newtheorem{remark}[theorem]{Remark}
\newcommand{\R}{\mathbb R}
\newcommand{\N}{\mathbb N}
\newcommand{\C}{\mathbb C}
\newcommand{\al}{\alpha}
\newcommand{\be}{\beta}
\newcommand{\Ga}{\Gamma}
\newcommand{\de}{\delta}
\newcommand{\De}{\Delta}
\newcommand{\si}{\sigma}
\newcommand{\te}{\theta}
\newcommand{\la}{\lambda}
\newcommand{\La}{\Lambda}
\newcommand{\Om}{\Omega}
\newcommand{\tensor}{\otimes}
\newcommand{\rFs}[5]{\,_{#1}F_{#2}\!\left( \genfrac{.}{.}{0pt}{}{#3}{#4}
\,;#5 \right)}
\newcommand{\su}{\mathfrak{su}}
\newcommand{\mhyphen}{\text{--}}
\title[Duality functions from Lie algebra representation]{Orthogonal stochastic duality functions from Lie algebra representations}
\author{Wolter Groenevelt}
\address{Technische Universiteit Delft, DIAM, PO Box 5031,
2600 GA Delft, the Netherlands}
\email{w.g.m.groenevelt@tudelft.nl}
\begin{document}
\maketitle

\begin{abstract}
We obtain stochastic duality functions for specific Markov processes using representation theory of Lie algebras. The duality functions come from the kernel of a unitary intertwiner between $*$-representations, which provides (generalized) orthogonality relations for the duality functions. In particular, we consider representations of the Heisenberg algebra and $\su(1,1)$. Both cases lead to orthogonal (self-)duality functions in terms of hypergeometric functions for specific interacting particle processes and interacting diffusion processes.
\end{abstract}

\section{Introduction}
A very useful tool in the study of stochastic Markov processes is duality, where information about a specific process can be obtained from another, dual, process. The concept of duality was introduced in the context of interacting particle systems in \cite{S}, and was later on developed in \cite{L}. For more application of duality see e.g.~\cite{KMP, Sp, C, GKR}.

Two processes are in duality if there exists a duality function, i.e.~a function of both processes such that the expectations with respect to the original process is related to the expectations with respect to the dual process (see Section \ref{sec2} for a precise statement). Recently in \cite{FG} and \cite{RS} orthogonal polynomials of hypergeometric type were obtained as duality functions for several families of stochastic processes, where the orthogonality is with respect to the corresponding stationary measures. These orthogonal polynomials contain the well-known simpler duality functions (in the terminology of \cite{RS}, the classical and cheap duality functions) as limit cases. In \cite{FG}, Franceschini and Giardin\`a use explicit relations between orthogonal polynomials of different degrees, such as raising and lowering formulas, to prove the stochastic duality. In \cite{RS}, Redig and Sau find the orthogonal polynomials using generating functions. With a similar method they also obtain Bessel functions, which are not polynomials, as self-duality function for a continuous process.

The goal of this paper is to demonstrate an alternative method to obtain the orthogonal polynomials (and other `orthogonal' functions) from \cite{FG} and \cite{RS} as duality functions. The method we use is based on representation theory of Lie algebras. This is inspired by \cite{GKRV} and \cite{CGGR}, where representation theory of $\mathfrak{sl}(2,\C)$ and the Heisenberg algebra is used to find (non-orthogonal) duality functions. Roughly speaking, the main idea is to consider a specific element $Y$ in the Lie algebra (or better, enveloping algebra). Realized in two different, but equivalent, representations $\rho$ and $\si$, $\rho(Y)$ and $\si(Y)$ are the generators of two stochastic processes. In case of $\mathfrak{sl}(2,\C)$, $Y$ is closely related to the Casimir operator. The duality functions come from an intertwiner between the two representations. In this paper we consider a similar construction with unitary intertwiners between $*$-representations, so that the duality functions will satisfy (generalized) orthogonality relations.

In Section \ref{sec2} the general method to find duality functions from unitary intertwiners is described. In Section \ref{sec:Heisenberg} the Heisenberg algebra is used to show duality and self-duality for the independent random walker process and a Markovian diffusion process. The self-duality of the diffusion process seems to be new. The (self-)duality functions are Charlier polynomials, Hermite polynomials and exponential functions. In Section \ref{sec:su11} we consider discrete series representation of $\su(1,1)$, and obtain Meixner polynomials, Laguerre polynomials and Bessel functions as (self)-duality functions for the symmetric inclusion process and the Brownian energy process. We would like to point out that these duality functions are essentially the (generalized) matrix elements for a change of base between bases on which elliptic or parabolic Lie group / algebra elements act diagonally, see e.g.~\cite{BK,K}, so in these cases stochastic duality is a consequence of a change of bases in the representation space.

\subsection{Notations and conventions}
By $\N$ we denote the set of nonnegative integers. We use standard notations for shifted factorials and hypergeometric functions as in e.g.~\cite{AAR}. We often write $f(x)$ for a function $x\mapsto f(x)$; the distinction between the function and its values should be clear from the context. For functions $x\mapsto f(x;p)$ depending on one or more parameters $p$, we often omit the parameters in the notation. For a set $E$, we denote by $F(E)$ the vector space of complex-valued functions on $E$. $\mathcal P$ is the vector space consisting of polynomials in one variable. We refer to \cite{KLS} for definitions and properties of the orthogonal polynomials we use in this paper.

\subsection*{Acknowledgements}
I thank Gioia Carinci, Chiara Franceschini, Cristian Giardin\`a and Frank Redig for very helpful discussions and giving valuable comments and suggestions.

\section{Stochastic duality functions from Lie algebra representations} \label{sec2}
In this section we describe the method to obtain stochastic duality functions from $*$-representations of a Lie algebra. This method will be applied in explicit examples in Sections \ref{sec:Heisenberg} and \ref{sec:su11}.

\subsection{Stochastic duality}
Let $X_1=\{\eta_1(t) \mid t>0\}$ and $X_2= \{ \eta_2(t) \mid t>0\}$ be stochastic Markov processes with state spaces $\Om_1$ and $\Om_2$, respectively. These processes are in duality if there exists a duality function $D:\Om_1\times \Om_2 \to \C$ such that for all $t>0$, $\eta_1$ and $\eta_2$, the relation
\[
\mathbb E_{\eta_1}\big[ D(\eta_1(t),\eta_2) \big] = \mathbb E_{\eta_2}\big[ D(\eta_1,\eta_2(t)) \big]
\]
holds, where $\mathbb E_\eta$ represents the expectation. If $X_1=X_2$, the process is called self-dual. Let $L_1$ and $L_2$ be the infinitesimal generators of the two processes. Duality of the processes is equivalent to duality of the generators, i.e.~
\[
[L_1 D(\cdot,\eta_2)](\eta_1) = [L_2 D(\eta_1,\cdot)](\eta_2), \qquad  (\eta_1,\eta_2) \in \Om_1 \times \Om_2.
\]
If $L_1=L_2$, then the operator is self-dual.

In this paper, we consider processes with state space $\Om = E_1 \times \cdots \times E_N$, where each $E_j$ is a subset of $\R$. Furthermore, the generators will be of the form
\begin{equation} \label{eq:L=sum Lij}
L = \sum_{i<j} L_{i,j}
\end{equation}
where $L_{i,j}$ is an operator on $F(E_i \times E_{j})$. This allows us to only consider operators acting on functions in two variables.

\subsection{Lie algebra representations}
Let $\mathfrak g$ be a complex Lie algebra and let $U(\mathfrak g)$ be its universal enveloping algebra. We assume $\mathfrak g$ has a $*$-structure, i.e.~there exists an involution $X \mapsto X^*$ such that
\[
(aX+bY)^* = \overline aX^* + \overline bY^*, \qquad [X,Y]^* =[Y^*,X^*], \qquad X,Y \in \mathfrak g,\ a,b \in \C.
\]
The $*$-structure extends uniquely to a $*$-structure of $U(\mathfrak g)$. Let $\rho$ be a representation of $\mathfrak g$ on the vector space $F(E)$, and assume that $\rho$ is a $*$-representation of $\mathfrak g$ on $\mathcal H=L^2(E,\mu)$, i.e.~$\langle \rho(X)f,g\rangle = \langle f, \rho(X^*) g \rangle$. We assume that $\rho(X)$ is defined on a suitable dense domain $\mathcal D \subset \mathcal H$ for all $X \in \mathfrak g$. $\rho$ extends uniquely to a $*$-representation of $U(\mathfrak g)$ on $\mathcal H$.

If $\rho_1$ and $\rho_2$ are $*$-representations of $\mathfrak g$ on $\mathcal H_1$ and $\mathcal H_2$ respectively, then $\rho$ defined by
\[
\rho(X) = (\rho_1 \tensor \rho_2) (\De(X)), \qquad \De(X) = 1 \tensor X + X \tensor 1 \in U(\mathfrak g)^{\tensor 2}, \quad X \in \mathfrak g,
\]
is a $*$-representation of $\mathfrak g$ on $\mathcal H_1 \tensor \mathcal H_2$ (the Hilbert space completion of the algebraic tensor product of $\mathcal H_1$ and $\mathcal H_2$). Furthermore, $\rho$ can be considered as a representation of $U(\mathfrak g)^{\tensor 2}$ by defining (slightly abusing notation)
\[
\rho(X) = (\rho_1 \tensor \rho_2)(X), \qquad X \in U(\mathfrak g)^{\tensor 2}.
\]
We will often use the notation $\rho=\rho_1\tensor \rho_2$.

Two $*$-representations $\rho_1$ and $\rho_2$ are unitarily equivalent if there exists a unitary operator $\La:\mathcal H_1 \to \mathcal H_2$ such that $\La(\mathcal D_1) \subset \mathcal D_2$ and $\La[\rho_1(X) f] = \rho_2(X) \La(f)$ for all $X \in \mathfrak g$ and $f \in \mathcal D_1$.

\begin{lemma} \label{lem:unitary equivalence}
Let $\rho_j$, $j=1,2$, be representations of $\mathfrak g$ on $F(E_j)$, and $*$-representations of $\mathfrak g$ on $L^2(E_j,\mu_j)$. Suppose \mbox{$K:E_1 \times E_2 \to \C$} is a function with the following properties:
\begin{enumerate}[1.]
\item $[\rho_1(X^*)K(\cdot,y)](x) = [\rho_2(X) K(x,\cdot)](y)$,  for all
$X \in \mathfrak g$ and $(x,y) \in E_1 \times E_2$.
\item The operator $\La:\mathcal D_1 \to L^2(E_2,\mu_2)$ defined by
    \[
    \La f = \left(y \mapsto \int_{E_1} f(x) K(x,y)\, d\mu_1(x) \right),
    \]
    extends to a unitary operator $\La : L^2(E_1,\mu_1) \to L^2(E_2,\mu_2)$.
\end{enumerate}
Then $\rho_1$ and $\rho_2$ are unitarily equivalent $*$-representations with intertwiner $\La$.
\end{lemma}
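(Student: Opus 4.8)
The operator $\La$ is unitary by hypothesis (2), so the only thing left to establish is the intertwining relation $\La[\rho_1(X)f] = \rho_2(X)\La f$ for all $X \in \mathfrak g$ and $f \in \mathcal D_1$, together with the inclusion $\La(\mathcal D_1) \subset \mathcal D_2$ needed for the right-hand side to make sense. The plan is to prove the relation pointwise in $y \in E_2$ by a direct computation that transports the Lie algebra action across the kernel $K$, using property (1) to swap the variable on which $\mathfrak g$ acts and the $*$-property to move the action from $K$ onto $f$.

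First I would expand the right-hand side. Since $\rho_2(X)$ acts only in the $E_2$-variable while $\La f$ is obtained by integrating against $K$ over $E_1$, I expect to pull $\rho_2(X)$ under the integral sign,
\[
[\rho_2(X)\La f](y) = \int_{E_1} f(x)\, [\rho_2(X) K(x,\cdot)](y)\, d\mu_1(x).
\]
Next I would invoke property (1), which is exactly the identity $[\rho_2(X)K(x,\cdot)](y) = [\rho_1(X^*)K(\cdot,y)](x)$, to rewrite the integrand so that $\mathfrak g$ now acts in the $E_1$-variable,
\[
[\rho_2(X)\La f](y) = \int_{E_1} f(x)\, [\rho_1(X^*)K(\cdot,y)](x)\, d\mu_1(x).
\]
Finally I would transfer $\rho_1(X^*)$ from $K$ onto $f$. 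Because $\rho_1$ is a $*$-representation on $L^2(E_1,\mu_1)$, the defining relation applied with $X^*$ in place of $X$ (using $(X^*)^* = X$) reads $\langle \rho_1(X^*)h, g\rangle = \langle h, \rho_1(X)g\rangle$, and this lets me pass from $\rho_1(X^*)K(\cdot,y)$ paired with $f$ to $K(\cdot,y)$ paired with $\rho_1(X)f$, giving
\[
\int_{E_1} f(x)\,[\rho_1(X^*)K(\cdot,y)](x)\, d\mu_1(x) = \int_{E_1}[\rho_1(X)f](x)\, K(x,y)\, d\mu_1(x) = \La[\rho_1(X)f](y).
\]
Chaining the three displays yields the intertwining relation, and together with the unitarity of $\La$ this is precisely the definition of unitarily equivalent $*$-representations.

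The main obstacle is analytic rather than algebraic. Pulling $\rho_2(X)$ under the integral in the first step requires a dominated-convergence or Fubini-type justification, since $\rho_2(X)$ is in general an unbounded operator assembled from differentiation and multiplication in the $E_2$-variable and is only defined on the dense domain $\mathcal D_2$; one must also verify that $\La f$ genuinely lands in $\mathcal D_2$, so that $\rho_2(X)\La f$ is meaningful. The transfer step likewise needs care, because the pairing defining $\La$ is bilinear in $(f,K)$ whereas the $*$-relation is stated for the sesquilinear inner product, so the complex conjugation must be bookkept correctly; in the explicit representations treated in Sections \ref{sec:Heisenberg} and \ref{sec:su11} the kernels and stationary measures render this transparent. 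Once these interchanges are justified on the dense domain $\mathcal D_1$, the pointwise identity upgrades to the desired operator identity and the lemma follows.
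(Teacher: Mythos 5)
Your proof is correct and follows essentially the same route as the paper's: both arguments compute $[\rho_2(X)\La f](y)$ and $(\La[\rho_1(X)f])(y)$ by pulling the operator under the integral, invoking property 1 to switch which variable the algebra acts on, and using the $*$-relation of $\rho_1$ with respect to $\mu_1$ to move the action between $f$ and $K(\cdot,y)$; the paper just writes the two displays and equates them, whereas you chain them starting from the right-hand side. The analytic caveats you flag (domain of $\rho_2(X)$, interchanging an unbounded operator with the integral, conjugation bookkeeping) are equally glossed over in the paper's one-line proof, so they mark no divergence in approach.
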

\begin{proof}
This follows directly from
\[
(\La [\rho_1(X)f])(y)  = \int_{E_1} [\rho_1(X)f](x) K(x,y) \,d\mu_j(x)  = \int_{E_1} f(x) [\rho_1(X^*)K(\cdot,y)](x) \, d\mu(x),
\]
and
\[
[\rho_2(X) (\La f)](y) = \int_{E_1} f(x) [\rho_2(X)K(x,\cdot)](y) \, d\mu_1(x),
\]
using property 1.
\end{proof}

\subsection{Duality from $*$-representations}
For $j\in\{1,\ldots,N\}$ let $\rho_j$ and $\si_j$ be unitarily equivalent $*$-representations of $\mathfrak g$ on $L^2(E_j,\mu_j)$ and $L^2(F_j,\nu_j)$, respectively. We assume that the corresponding unitary intertwiner $\La_j:L^2(E_j,\mu_j) \to L^2(F_j,\nu_j)$ is an integral operator as in Lemma \ref{lem:unitary equivalence}, i.e.
\[
(\La_jf)(y) = \int_{E_j} f(x) K_j(x,y) \,d\mu_j(x) , \qquad \text{for $\nu_j$-almost all } y \in F_j,
\]
for some kernel $K_j:E_j\times F_j \to \C$ satisfying
\begin{equation} \label{eq:rho d=si d}
[\rho_j(X^*)K_j(\cdot,y)](x)= [\si_j(X)K_j(x,\cdot)](y), \qquad (x,y) \in E_j \times F_j, \qquad X \in \mathfrak g.
\end{equation}
Define
\[
\Om_1=E_1\times \cdots \times E_N, \qquad \Om_2= F_1 \times \cdots \times  F_N,
\]
and let $\mu$ and $\nu$ be the product measures on $\Om_1$ and $\Om_2$ given by
\[
\mu=\mu_1 \times \cdots \times \mu_N, \qquad \nu= \nu_1 \times \cdots \times \nu_N,
\]
then $\rho=\rho_1 \tensor \cdots \tensor \rho_N$ and $\si=\si_1 \tensor \cdots \tensor \si_N$ are $*$-representations of $\mathfrak g$ on $L^2(\Om_1,\mu)$ and $L^2(\Om_2,\nu)$.

Suppose that $L_1$ and $L_2$ are self-adjoint operators on $L^2(\Om_1,\mu)$ and $L^2(\Om_2,\nu)$, respectively, given by
\[
L_1=\rho(Y_L), \qquad L_2=\si(Y_L),
\]
for some self-adjoint $Y_L \in U(\mathfrak g)^{\tensor N}$. Then the following result holds.

\begin{theorem} \label{thm:duality}
$L_1$ and $L_2$ are in duality, with duality function given by
\[
D(x,y)=\prod_{j=1}^N K_j(x_j,y_j), \qquad x=(x_1,\ldots,x_N) \in \Om_1, \ y=(y_1,\ldots,y_N) \in \Om_2.
\]
\end{theorem}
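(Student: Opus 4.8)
The plan is to reduce the claim to duality of the generators $L_1$ and $L_2$, which by the discussion in Section \ref{sec2} is equivalent to duality of the processes, and then to establish the pointwise identity $[L_1 D(\cdot,y)](x) = [L_2 D(x,\cdot)](y)$ for $(x,y) \in \Om_1 \times \Om_2$. The whole argument rests on promoting the single-generator kernel relation \eqref{eq:rho d=si d} first to the full enveloping algebra $U(\mathfrak g)$ in each slot, and then to the $N$-fold tensor product $U(\mathfrak g)^{\tensor N}$ acting on the product kernel $D$. The key structural facts I would exploit are that the $*$-operation is an anti-homomorphism, so that $(XW)^* = W^* X^*$, together with the elementary observation that operators acting on different tensor slots (equivalently, on different variables) commute.

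First I would prove, for each fixed $j$ and every $Z_j \in U(\mathfrak g)$, the identity
\[
[\rho_j(Z_j^*) K_j(\cdot,y)](x) = [\si_j(Z_j) K_j(x,\cdot)](y), \qquad (x,y) \in E_j \times F_j,
\]
by induction on the degree of $Z_j$. The base case $Z_j \in \mathfrak g$ is exactly \eqref{eq:rho d=si d}. For the inductive step I would write $Z_j = X W$ with $X \in \mathfrak g$ and $W$ of strictly lower degree, so that $Z_j^* = W^* X^*$ and $\rho_j(Z_j^*) = \rho_j(W^*)\rho_j(X^*)$. Applying \eqref{eq:rho d=si d} to the innermost factor converts $\rho_j(X^*)$ acting in the first variable into $\si_j(X)$ acting in the second variable; since $\rho_j(W^*)$ acts only on the first variable it commutes with $\si_j(X)$, and the induction hypothesis turns $\rho_j(W^*)$ into $\si_j(W)$. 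Combining these and using that $\si_j$ is a homomorphism yields $\si_j(X)\si_j(W) = \si_j(XW) = \si_j(Z_j)$, which is the claim.

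Next I would tensor up. For an elementary tensor $Z = Z_1 \tensor \cdots \tensor Z_N \in U(\mathfrak g)^{\tensor N}$ one has $Z^* = Z_1^* \tensor \cdots \tensor Z_N^*$ and $\rho(Z^*) = \rho_1(Z_1^*) \tensor \cdots \tensor \rho_N(Z_N^*)$, where the $j$-th factor acts only on the variable $x_j$. Because the kernel factorizes as $D(x,y) = \prod_{j=1}^N K_j(x_j,y_j)$, applying $\rho(Z^*)$ to $D(\cdot,y)$ splits as a product over $j$, and the single-slot identity just established gives
\[
[\rho(Z^*) D(\cdot,y)](x) = \prod_{j=1}^N [\rho_j(Z_j^*) K_j(\cdot,y_j)](x_j) = \prod_{j=1}^N [\si_j(Z_j) K_j(x_j,\cdot)](y_j) = [\si(Z) D(x,\cdot)](y).
\]
By linearity this extends to all $Z \in U(\mathfrak g)^{\tensor N}$. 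Finally I would specialize $Z = Y_L$: since $Y_L$ is self-adjoint, $Y_L^* = Y_L$, so the displayed identity reads $[\rho(Y_L) D(\cdot,y)](x) = [\si(Y_L) D(x,\cdot)](y)$, i.e.\ $[L_1 D(\cdot,y)](x) = [L_2 D(x,\cdot)](y)$, which is precisely the required duality of generators.

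The main obstacle I anticipate lies entirely in the first step: making the induction over $U(\mathfrak g)$ rigorous requires the anti-multiplicativity of $*$ and the commutation of operators in disjoint variables to be invoked in the correct order, and one must keep track of the analytic caveat that $\rho_j(X)$ and $\si_j(X)$ are moved inside the defining integral of $\La_j$ only on a suitable dense domain, with $K_j(\cdot,y)$ lying in $\mathcal D_j$. In the concrete representations of Sections \ref{sec:Heisenberg} and \ref{sec:su11} these identities in fact hold pointwise, so beyond this verification no further analytic care is needed and the tensoring and specialization steps are routine.
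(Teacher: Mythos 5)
Your proposal is correct and follows essentially the same route as the paper's proof: decompose $Y_L$ into elementary tensors, reduce to the slot-wise kernel relation, and extend \eqref{eq:rho d=si d} from $\mathfrak g$ to products of Lie algebra elements. The only difference is one of detail — your induction using the anti-multiplicativity of $*$ and the commutation of operators in disjoint variables, and your explicit appeal to $Y_L^* = Y_L$ at the end, are exactly the steps the paper compresses into ``the result follows from \eqref{eq:rho d=si d}.''
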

\begin{proof}
Write $Y_L = \sum Y_{(1)} \tensor \cdots \tensor Y_{(N)}$, with $Y_{(j)} \in U(\mathfrak g)$. It is enough to verify that
\[
[\rho_j(Y^*_{(j)}) K_j(\cdot,y_j)](x_j) = [\si_j(Y_{(j)} K_j(x_j,\cdot)](y_j), \qquad (x_j,y_j) \in E_j \times F_j,
\]
for $j=1,2$.
Since we have $Y_{(j)}= Y_{j,1}Y_{j,2} \cdots Y_{j,k_j}$ for certain $Y_{j,i} \in \mathfrak g$, the result follows from \eqref{eq:rho d=si d}.
\end{proof}

In the following sections we apply Theorem \ref{thm:duality} using explicit representations in terms of difference operators or differential operators. The main problem is finding explicitly the appropriate intertwiner. The algebra element $Y_L$ will always have a specific form corresponding to \eqref{eq:L=sum Lij};
\[
Y_L = \sum_{i<j}  Y_{i,j} \qquad \text{with } Y \in U(\mathfrak g)^{\tensor 2}.
\]
Here we use the following notation: for $Y=\sum Y_{(1)} \tensor Y_{(2)} \in U(\mathfrak g)^{\tensor 2}$ we denote by $Y_{i,j} \in U(\mathfrak g)^{\tensor N}$ the element
\[
Y_{i,j} = \sum 1 \tensor \cdots \tensor 1 \tensor Y_{(1)} \tensor 1 \tensor \cdots \tensor 1 \tensor Y_{(2)} \tensor 1 \cdots \tensor 1,
\]
with $Y_{(1)}$ in the $i$-th factor and $Y_{(2)}$ in the $j$-th factor.

\section{The Heisenberg algebra} \label{sec:Heisenberg}
To illustrate how the method from the previous section is applied, we use the Heisenberg Lie algebra to obtain duality functions for two stochastic processes. Let us first describe the processes.

The independent random walker process IRW is a Markov jump process where particles move independently between $N$ sites, and each site can contain an arbitrary number of particles. Particles jump from site $i$ to site $j$ with rate proportional to the number of particles $n_i$ at site $i$. The generator of this process is the difference operator acting on appropriate function in $F(\N^N)$ given by
\begin{equation} \label{eq:L IRW}
L^{\mathrm{IRW}}f(n) = \sum_{1 \leq i < j \leq N} n_i \left( f(n^{i,j}) - f(n) \right) +  n_j \left( f(n^{j,i} - f(n) \right), \qquad n \in \N^N,
\end{equation}
Here $n^{i,j} = n+e_i-e_j$, where $e_i$ the standard basis vector with $1$ as $i$-th component and all other component are $0$.

The second process is a Feller diffusion process on $\R^N$ with a constant diffusion matrix, and a drift vector which is a function of the difference of pairs of coordinates. It can be considered as $N$ Brownian motions which are attracted to each other with a rate proportional to their distances.
The generator is a differential operator on appropriate functions in $F(\R^N)$ given by
\begin{equation} \label{eq:L DIF}
L^{\mathrm{DIF}}f(x) = c\sum_{1 \leq i < j \leq N} \left( \frac{\partial}{\partial x_i}-  \frac{ \partial}{\partial x_j} \right)^2 f(x)- (x_i-x_j)\left(  \frac{\partial}{\partial x_i}- \frac{ \partial}{\partial x_j} \right) f(x),
\end{equation}
where $x \in \R^N$ and $c>0$.

Note that both generators have the form \eqref{eq:L=sum Lij}.\\

The Heisenberg algebra $\mathfrak h$ is the Lie algebra with generators $a,a^\dagger,Z$ satisfying
\begin{equation} \label{eq:commutations heisenberg}
[a,Z]=[a^\dagger,Z]=0, \qquad [a^\dagger,a]=Z.
\end{equation}
The $*$-structure is given by $a^*=a^\dagger$, $(a^\dagger)^*=a$ and $Z^*=Z$. Note that $Z$ is a central element.
$\mathfrak h$ has a representation $\rho_{c}$ with parameter $c>0$ on $F(\N)$ given by
\begin{equation} \label{eq:representation h}
\begin{split}
[\rho_{c}(a)f](n) &= n f(n-1), \\
[\rho_{c}(a^\dagger)f](n) &=  cf(n+1),\\
[\rho_{c}(Z)f)](n) &= cf(n),
\end{split}
\end{equation}
where $f(-1)=0$. Then $\rho_{c}$ is a $*$-representation on the weighted $L^2$-space $\mathcal H_c = \ell^2(\N,w_c)$ consisting of functions in $F(\N)$ that have finite norm with respect to the inner product
\[
\langle f,g\rangle = \sum_{n \in \N} w_c(n)\, f(n) \overline{g(n)}, \qquad w_c(n) = \frac{c^n}{n!}e^{-c}.
\]
$\rho_c$ is an unbounded representation, with dense domain the set $F_0(\N)$ consisting of finitely supported functions.

Define $Y \in U(\mathfrak h)^{\tensor 2}$ by
\begin{equation} \label{eq:Y in h}
Y= (1 \tensor a - a \tensor 1 )(a^\dagger \tensor 1 - 1 \tensor a^\dagger).
\end{equation}
This element gives us the relation with the system of independent random walkers.
\begin{lemma} \label{lem:L IRW=sum Y}
For $c>0$, let $\rho$ be the tensor product representation $\rho = \rho_{c} \tensor \cdots \tensor \rho_{c}$ of $\mathfrak h$ on $\mathcal H_c^{\tensor N}$, then
\[
L^{\mathrm{IRW}} = c^{-1}\sum_{1 \leq i < j \leq N} \rho(Y_{i,j}).
\]
\end{lemma}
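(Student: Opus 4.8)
The plan is to exploit the fact that both sides of the asserted identity are sums indexed by the same set of pairs $i<j$, and that each summand acts on only two of the $N$ tensor legs. Since $Y_{i,j}$ inserts the two legs of $Y$ into the $i$-th and $j$-th slots and the unit elsewhere, the operator $\rho(Y_{i,j})$ affects $f(n_1,\dots,n_N)$ only through its dependence on $n_i$ and $n_j$; the same holds for the $(i,j)$-summand of $L^{\mathrm{IRW}}$ in \eqref{eq:L IRW}. So I would match the two expressions term by term over pairs, which reduces the whole claim to the two-factor computation $c^{-1}(\rho_c\tensor\rho_c)(Y)=L^{\mathrm{IRW}}_{1,2}$ acting on $f(n_1,n_2)$; the general pair then follows verbatim after relabelling the two active coordinates.

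For that two-factor computation I would first expand the product defining $Y$ in $U(\mathfrak h)^{\tensor 2}$, using $(u_1\tensor u_2)(v_1\tensor v_2)=u_1v_1\tensor u_2v_2$, to get
\[
Y = a^\dagger \tensor a + a \tensor a^\dagger - (aa^\dagger)\tensor 1 - 1\tensor(aa^\dagger).
\]
Next I would record the single-site actions from \eqref{eq:representation h}: the operators $\rho_c(a)$ and $\rho_c(a^\dagger)$ are the lowering and raising maps $f(n)\mapsto nf(n-1)$ and $f(n)\mapsto cf(n+1)$, and a short calculation gives $[\rho_c(aa^\dagger)f](n)=cn\,f(n)$, i.e.\ $\rho_c(aa^\dagger)$ is $c$ times the number operator. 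Applying $\rho=\rho_c\tensor\rho_c$ legwise, the two cross terms become the single-particle hops $[\rho(a^\dagger\tensor a)f](n_1,n_2)=cn_2\,f(n_1+1,n_2-1)$ and $[\rho(a\tensor a^\dagger)f](n_1,n_2)=cn_1\,f(n_1-1,n_2+1)$, while the two diagonal terms contribute $-cn_1 f(n_1,n_2)$ and $-cn_2 f(n_1,n_2)$.

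Collecting these and dividing by $c$ then reproduces precisely the $(1,2)$-summand of \eqref{eq:L IRW}: the off-diagonal part consists of the two nearest-configuration moves $n\mapsto n\pm(e_1-e_2)$ with rates proportional to the occupation numbers, and the diagonal part is exactly $-(n_1+n_2)f(n)$. I would note in particular that the clean diagonal $-(n_1+n_2)$, with no spurious shift, is what forces the expansion to involve $aa^\dagger$ rather than $a^\dagger a$, and serves as a consistency check on the sign conventions. I do not expect any genuine obstacle: the argument is a direct verification, and the only care required is bookkeeping—carrying out the noncommutative expansion of $Y$ correctly and tracking which leg of each tensor acts in which variable so that the number operator coming from $aa^\dagger$ supplies exactly the jump rates of \eqref{eq:L IRW}.
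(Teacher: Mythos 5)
Your proposal is correct and follows essentially the same route as the paper: both reduce the identity to the single pair $(i,j)=(1,2)$ acting on functions of $(n_1,n_2)$ and then verify $c^{-1}(\rho_c\tensor\rho_c)(Y)$ equals that summand by direct computation, with the paper simply stating the outcome of the expansion that you carry out explicitly (the expansion of $Y$ into $a^\dagger\tensor a + a\tensor a^\dagger - aa^\dagger\tensor 1 - 1\tensor aa^\dagger$ and the identification $\rho_c(aa^\dagger)=cn$). Your remark that the ordering $aa^\dagger$ (rather than $a^\dagger a$, which would give $c(n+1)$) is what produces the clean diagonal $-(n_1+n_2)$ is a useful sanity check that the paper leaves implicit.
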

\begin{proof}
It suffices to consider $(\rho_{c} \tensor \rho_{c}) (Y)$ acting on functions in two variables $n_1$ and $n_2$. From \eqref{eq:representation h} and \eqref{eq:Y in h} we find
\[
\begin{split}
[\rho_{c}\tensor \rho_{c}&(Y)f](n_1,n_2) = \\ &c n_1 \Big( f(n_1-1,n_2+1) - f(n_1,n_2) \Big) + c n_2 \Big( f(n_1+1,n_2-1) - f(n_1,n_2) \Big).
\end{split}
\]
This corresponds to the term $(i,j)=(1,2)$ in \eqref{eq:L IRW}.
\end{proof}
\begin{remark}
We can also consider the tensor product representation $\rho_{c_1} \tensor \cdots \tensor \rho_{c_N}$ with (possibly) $c_i \neq c_j$. This leads to a generator of a Markov process depending on $N$ different parameters. However, to prove self-duality it seems crucial to assume $c_i=c$ for all $i$, see Lemma \ref{lem:Y->Y+R} later on.
\end{remark}

\subsection{Charlier polynomials and self-duality of IRW}
The Charlier polynomials are defined by
\[
C_n(x;c) = \rFs{2}{0}{-n,-x}{\mhyphen}{-\frac{1}{c}}.
\]
They form an orthogonal basis for $\ell^2(\N, w_a)$, with orthogonality relations
\[
\sum_{x\in \N}w_c(x)\, C_m(x;c) C_n(x;c) = \de_{mn} c^{-n}n!, \qquad a>0,
\]
and they have the following raising and lowering property,
\begin{equation} \label{eq:relations Charlier}
\begin{split}
nC_{n-1}(x;c) &= cC_n(x;c) - c C_{n}(x+1;c), \\
cC_{n+1}(x;c) &=c C_n(x;c)- x C_{n}(x-1;c).
\end{split}
\end{equation}
They are self-dual, i.e.~$C_n(x;c)= C_x(n;c)$.

Let us consider the actions of $a$ and $a^\dagger$ on the Charlier polynomials
\[
C(n,x;c)= e^{c}C_n(x;c).
\]
The reason for this normalization is to obtain a unitary intertwiner with $C(n,x)$ as a kernel later on. Using the raising and lowering properties \eqref{eq:relations Charlier} we obtain
\begin{equation} \label{eq:actions on Cnx}
\begin{split}
[\rho_c(a) C(\cdot,x)](n) &= n C(n-1,x) = c C(n,x) - c C(n,x+1),\\
[\rho_c(a^\dagger) C(\cdot,x)](n) & = c C(n+1,x) = c C(n,x) - x C(n,x-1).
\end{split}
\end{equation}
Note that the actions on the $x$-variable are similar to the actions of $Z-a$ and $Z-a^\dagger$ in the $n$-variable.
This motivates the definition of the following isomorphism.
\begin{lemma} \label{lem:te h-> h}
The assignments
\[
\te(a)=Z-a,\quad \te(a^\dagger) = Z-a^\dagger, \quad  \te(Z) = Z,
\]
extend uniquely to a Lie algebra isomorphism $\te:\mathfrak h \to \mathfrak h$.
\end{lemma}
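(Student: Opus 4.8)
The plan is to exploit the fact that $\mathfrak h$ is three-dimensional as a vector space, with $\{a, a^\dagger, Z\}$ a basis. First I would extend $\te$ to a linear map $\mathfrak h \to \mathfrak h$ by declaring its values on this basis to be as stated. Since a linear map is uniquely determined by its action on a basis, and since any Lie algebra homomorphism is in particular linear, this already settles the uniqueness claim: there is at most one linear (hence at most one Lie-algebraic) extension of the given assignments.

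Next I would verify that this linear $\te$ is actually a homomorphism of Lie algebras, i.e.~that $\te([X,Y]) = [\te(X),\te(Y)]$. By bilinearity and antisymmetry of the bracket it suffices to check this on the three pairs of basis elements. The two pairs involving $Z$ are immediate: since $Z$ is central and $\te(Z) = Z$, both $[\te(a),\te(Z)] = [Z-a, Z]$ and $[\te(a^\dagger),\te(Z)] = [Z-a^\dagger, Z]$ vanish, matching $\te([a,Z]) = \te([a^\dagger,Z]) = 0$. The only relation requiring computation is $[a^\dagger, a] = Z$. Here I would expand
\[
[\te(a^\dagger),\te(a)] = [Z - a^\dagger,\ Z - a] = [Z,Z] - [Z,a] - [a^\dagger,Z] + [a^\dagger,a],
\]
and use that the first three brackets vanish by centrality of $Z$ (together with \eqref{eq:commutations heisenberg}), leaving $[a^\dagger,a] = Z = \te(Z) = \te([a^\dagger,a])$, as required.

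Finally, to upgrade from homomorphism to isomorphism, I would observe that $\te$ is an involution. Applying $\te$ twice to the generators gives $\te(\te(a)) = \te(Z-a) = Z - (Z-a) = a$, and likewise $\te^2(a^\dagger) = a^\dagger$ and $\te^2(Z) = Z$; by linearity $\te^2 = \mathrm{id}$ on all of $\mathfrak h$. Hence $\te$ is its own inverse, so it is bijective, and being a bijective Lie algebra homomorphism it is a Lie algebra isomorphism.

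I do not expect any genuine obstacle here: the statement reduces to checking finitely many bracket identities on a basis, all of which are short. The only point that demands a little care is sign-tracking in the expansion of $[Z-a^\dagger, Z-a]$, where one must remember that the cross terms $[Z,a]$ and $[a^\dagger,Z]$ vanish by centrality rather than by antisymmetry alone.
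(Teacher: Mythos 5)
Your proposal is correct and follows essentially the same route as the paper, which simply states that one checks the commutation relations \eqref{eq:commutations heisenberg} by a straightforward computation; your verification of the bracket $[Z-a^\dagger, Z-a] = Z$ is exactly that check. The additional details you supply --- uniqueness via linearity on the basis $\{a, a^\dagger, Z\}$ and bijectivity via the involution $\te^2 = \mathrm{id}$ --- are the points the paper leaves implicit, and they are handled correctly.
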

\begin{proof}
The proof consists of checking the commutation relations \eqref{eq:commutations heisenberg}, which is a straightforward computation.
\end{proof}
Note that
\[
[\rho_c(\te(a)) C(\cdot;x)](n) = c\,C(n,x+1), \qquad [\rho_c(\te(a^\dagger)) C(\cdot;x)](n) = x\, C(n,x-1).
\]
Furthermore, $\te$ preserves the $*$-structure, i.e.~$\te(X^*) = \te(X)^*$. Clearly, $\rho_c\circ \te$ is again a $*$-representation of $\mathfrak h$ on $\mathcal H_c$.  We will use the Charlier polynomials to construct a unitary intertwiner between $\rho_c$ and $\rho_c \circ \te$.
\begin{prop} \label{prop:intertwiner Charlier}
Define the operator $\La:F_0(\N)\to F(\N)$ by
\[
(\La f)(x) = \sum_{n \in \N} w_c(n) f(n) C(n,x;c),
\]
then $\La$ extends to a unitary operator $\La:\mathcal H_c \to \mathcal H_c$, and intertwines $\rho_c$ with $\rho_c\circ \te$. Furthermore, the kernel $C(n,x)$ satisfies
\begin{equation} \label{eq:rhoC=siC}
[\rho_c(X^*) C(\cdot,x)](n) = [\rho_c(\te(X)) C(n,\cdot)](x), \qquad X \in \mathfrak h.
\end{equation}
\end{prop}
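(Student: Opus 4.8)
The plan is to reduce everything to Lemma \ref{lem:unitary equivalence}, applied with $\rho_1=\rho_c$, $\rho_2=\rho_c\circ\te$ and kernel $K(n,x)=C(n,x)$. Indeed, the displayed identity \eqref{eq:rhoC=siC} is exactly hypothesis~1 of that lemma, unitarity of $\La$ is hypothesis~2, and once both hold the lemma yields that $\La$ intertwines $\rho_c$ with $\rho_c\circ\te$. So it remains to establish (i) the kernel relation \eqref{eq:rhoC=siC}, and (ii) that $\La$ extends to a unitary operator on $\mathcal H_c$.

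For (i), since \eqref{eq:rhoC=siC} enters Theorem \ref{thm:duality} only through products of generators, it is enough to verify it for $X\in\{a,a^\dagger,Z\}$. Each case is read off from the actions \eqref{eq:actions on Cnx} on the first variable together with the $\te$-actions on the second variable recorded after Lemma \ref{lem:te h->h}. For $X=a$ one matches $[\rho_c(a^\dagger)C(\cdot,x)](n)=cC(n,x)-xC(n,x-1)$ against $[\rho_c(\te(a))C(n,\cdot)](x)=[\rho_c(Z-a)C(n,\cdot)](x)=cC(n,x)-xC(n,x-1)$; for $X=a^\dagger$ one matches $[\rho_c(a)C(\cdot,x)](n)=cC(n,x)-cC(n,x+1)$ against $[\rho_c(Z-a^\dagger)C(n,\cdot)](x)=cC(n,x)-cC(n,x+1)$; and for $X=Z$ both sides equal $cC(n,x)$. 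These are pointwise identities of functions, so no domain questions arise.

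For (ii), note first that $\La$ is well defined on $F_0(\N)$ with values in $\mathcal H_c$: for finitely supported $f$ the sum defining $\La f$ is finite, so $\La f$ is a polynomial, and because $w_c$ decays superexponentially every polynomial lies in $\ell^2(\N,w_c)$. Expanding $\langle\La f,\La g\rangle$, interchanging the finite sums and collapsing the inner sum by the Charlier orthogonality relations shows that $\La$ preserves the inner product (the normalization of $C(n,x)$ is precisely what makes the resulting constant equal to $1$), whence $\La$ extends to an isometry of $\mathcal H_c$. Concretely $\La$ sends the indicator function at $n$ to a scalar multiple of $C(n,\cdot)$, i.e.\ to a multiple of the Charlier polynomial $C_n(\cdot;c)$; since these form an orthogonal basis of $\mathcal H_c$, the image of $\La$ is dense, so the isometry is onto and hence unitary. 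Alternatively, the self-duality $C(n,x)=C(x,n)$ of the kernel makes $\La$ self-adjoint, and a self-adjoint isometry is automatically unitary.

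The generator computations in (i) are immediate from the raising and lowering relations \eqref{eq:relations Charlier}, so the only substantive external ingredient is the classical fact (see \cite{KLS}) that the Charlier polynomials form a \emph{complete} orthogonal system in $\mathcal H_c$. It is this completeness, used for the surjectivity of $\La$, that I would regard as the main point to secure; everything else is bookkeeping.
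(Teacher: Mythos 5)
Your proposal is correct and takes essentially the same route as the paper: check \eqref{eq:rhoC=siC} on the generators $a,a^\dagger,Z$ via \eqref{eq:actions on Cnx} and the definition of $\te$, obtain unitarity from the orthogonality and completeness of the Charlier polynomials (the paper phrases this as $\La$ mapping the orthogonal basis of cheap duality functions $\de_m(n)=\de_{m,n}/w_c(n)$ onto the orthogonal basis $C(m,\cdot)$ with equal norms, which is exactly your isometry-plus-dense-range argument), and then invoke Lemma \ref{lem:unitary equivalence}. One minor caveat, shared by the paper itself rather than a gap in your argument: with the printed normalization $C(n,x;c)=e^{c}C_n(x;c)$ the constant in the isometry computation comes out as $e^{c}$ rather than $1$ (equivalently $\|C(m,\cdot)\|^2=e^{2c}c^{-m}m!\neq 1/w_c(m)$), so the prefactor should be $e^{c/2}$, consistent with the Hermite normalization used later.
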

\begin{proof}
The cheap duality functions $\de_m(n) = \frac{ \de_{m,n} }{w_c(n)}$ form an orthogonal basis for $\mathcal H_c$ with squared norm $\|\de_m\|^2 = \frac{1}{w_c(m)}$. Applying $\La$ to $\de_m$ gives
\[
\La(\de_m)(x) = C(m,x).
\]
From the orthogonality relations for the Charlier polynomials we find that the squared norm of $C(m,x)$ is $\|C(m,\cdot)\|^2 = \frac{1}{w_c(m)}$. So $\La$ maps an orthogonal basis to another orthogonal basis with the same norm, hence $\La$ is unitary.

To apply Lemma \ref{lem:unitary equivalence} we need to verify that \eqref{eq:rhoC=siC} is satisfied. It is enough to do this for $X = a, a^\dagger, Z$. Using \eqref{eq:actions on Cnx} and $a^*=a^\dagger$ we see that
\begin{gather*}
[\rho_c(a^*) C(\cdot,x)](n)=  c C(n,x) - x C(n,x-1) = [\rho_c(Z-a)C(n,\cdot)](x), \\
[\rho_c((a^\dagger)^*) C(\cdot,x)](n)  =  c C(n,x) - c C(n,x+1) = [\rho_c(Z-a^\dagger)C(n,\cdot)](x).
\end{gather*}
The action of $Z$ is clear. Now the result follows from the definition of $\te$, see Lemma \ref{lem:te h-> h}.
\end{proof}
We are almost ready to prove self-duality for IRW, but first we need to know the image of $Y$, see \eqref{eq:Y in h}, under the isomorphism $\te \tensor \te$.
\begin{lemma} \label{lem:Y->Y+R}
The following identity in $\mathfrak h \tensor \mathfrak h$ holds:
\[
\theta \tensor \theta (Y) = Y + R
\]
with
\[
\begin{split}
R =\, &  1 \tensor Za^\dagger - Z \tensor a^\dagger + Za^\dagger\tensor 1 - a^\dagger \tensor Z + 1 \tensor aZ - Z \tensor a + aZ \tensor 1 - a \tensor Z\\
& + 2 \, Z \tensor Z - Z^2 \tensor 1 - 1 \tensor Z^2.
\end{split}
\]
In the representation $\rho_c \tensor \rho_c$, the element $R$ is the zero operator on $\mathcal H_c \tensor \mathcal H_c$.
\end{lemma}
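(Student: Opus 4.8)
The plan is to organize the whole computation around the element $W = 1 \tensor Z - Z \tensor 1 \in U(\mathfrak h)^{\tensor 2}$, which is central because $Z$ is central in $\mathfrak h$. Since $\theta$ is a Lie algebra isomorphism it extends to an algebra automorphism of $U(\mathfrak h)$, so $\theta \tensor \theta$ is multiplicative on $U(\mathfrak h)^{\tensor 2}$ and it suffices to transform the two linear factors of $Y$ separately. Writing $P = 1 \tensor a - a \tensor 1$ and $Q = a^\dagger \tensor 1 - 1 \tensor a^\dagger$, so that $Y = PQ$, and inserting $\theta(a) = Z-a$, $\theta(a^\dagger) = Z - a^\dagger$, a short factorwise computation gives
\[
(\theta \tensor \theta)(P) = W - P, \qquad (\theta \tensor \theta)(Q) = -W - Q.
\]

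Next I would multiply these out. Since $W$ is central it commutes with $P$ and $Q$, so
\[
(\theta \tensor \theta)(Y) = (W-P)(-W-Q) = PQ + W(P-Q) - W^2 = Y + W\big((P-Q)-W\big),
\]
which identifies $R = W(P-Q) - W^2$. Expanding this into the monomial basis of $U(\mathfrak h)^{\tensor 2}$, using centrality of $Z$ to freely write $Za = aZ$ and $Za^\dagger = a^\dagger Z$, reproduces the eleven stated monomials of $R$ term by term. This is the only genuinely computational step; I expect the sole difficulty to be the sign and ordering bookkeeping, and there is no conceptual obstacle.

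For the second assertion the factorization $R = W\big((P-Q)-W\big)$ does all the work: $R$ is a left multiple of the central element $W$. In the representation $\rho_c$ the central generator $Z$ acts as the scalar $c$, so in $\rho_c \tensor \rho_c$ the element $W$ acts as $c-c = 0$. Hence $(\rho_c \tensor \rho_c)(R) = 0$ at once, with no need to examine the individual terms. This also explains, as anticipated in the preceding remark, why equal parameters are essential: $W$ records the difference of the two central scalars, so $(\rho_{c_1} \tensor \rho_{c_2})(W) = c_2 - c_1$ vanishes precisely when $c_1 = c_2$.
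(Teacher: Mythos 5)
Your proof is correct, and it is organized differently from the paper's. The paper's own proof is a brute-force one: it expands $\te\otimes\te(Y)$ directly from the definition of $\te$ (sixteen cross terms), reads off $R$, and then checks that $\rho_c\otimes\rho_c(R)=0$ by substituting $\rho_c(Z)=c\,\mathrm{Id}$ into the resulting list of monomials. Your factorization through the central element $W=1\otimes Z-Z\otimes 1$ — namely $(\te\otimes\te)(P)=W-P$, $(\te\otimes\te)(Q)=-W-Q$, hence $R=W\bigl((P-Q)-W\bigr)$ — is a genuinely cleaner route: the multiplicativity of $\te\otimes\te$ on $U(\mathfrak h)^{\otimes 2}$ reduces the sixteen-term expansion to a two-line computation, and the second assertion becomes structural rather than computational, since $R$ is a left multiple of $W$ and $\rho_c\otimes\rho_c(W)=(c-c)\,\mathrm{Id}=0$, with no need to inspect the eleven monomials. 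Your closing observation that $(\rho_{c_1}\otimes\rho_{c_2})(W)=(c_2-c_1)\,\mathrm{Id}$ also makes precise why the paper's preceding Remark insists on equal parameters $c_i=c$, something the paper asserts but does not explain. What the paper's direct expansion buys is only the explicit monomial form of $R$ appearing in the lemma statement; in your argument that form is recovered by the (routine) expansion of $W(P-Q)-W^2$, which you correctly identify as the sole bookkeeping step, and which indeed reproduces the stated $R$ term by term.
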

\begin{proof}
After a somewhat tedious computation using the definition of $\te$ in Lemma \ref{lem:te h-> h}, we find the explicit expression for $\te \tensor \te (Y)$. Using $\rho_c(Z) = c\, \mathrm{Id}$ it follows that $\rho_c \tensor \rho_c (R) = 0$.
\end{proof}

We can now apply Theorem \ref{thm:duality} with $\rho=\rho_{c} \tensor\cdots \tensor \rho_{c}$ and $\si=\rho \circ (\te \tensor \cdots \tensor \te)$. Using Lemma \ref{lem:Y->Y+R} we find $\si(Y_{i,j})=\rho(Y_{i,j})$, and then it follows that $\sum \si(Y_{i,j}) = L^{\mathrm{IRW}}$, see Lemma \ref{lem:L IRW=sum Y}. So we obtain the well-known self-duality of the independent random walker process. Here the duality function is a product of Charlier polynomials.
\begin{theorem}
The operator $L^{\mathrm{IRW}}$ given by \eqref{eq:L IRW} is self-dual, with duality function
\[
\prod_{j=1}^N C(n_j,x_j;c), \qquad c>0.
\]
\end{theorem}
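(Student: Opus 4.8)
The plan is to exhibit $L^{\mathrm{IRW}}$ as $\rho(Y_L)$ for a self-adjoint element $Y_L \in U(\mathfrak h)^{\tensor N}$ and then invoke Theorem \ref{thm:duality} with the Charlier intertwiner of Proposition \ref{prop:intertwiner Charlier}. I would take
\[
Y_L = c^{-1}\sum_{1\le i<j\le N} Y_{i,j}, \qquad Y = (1 \tensor a - a \tensor 1)(a^\dagger \tensor 1 - 1 \tensor a^\dagger),
\]
so that Lemma \ref{lem:L IRW=sum Y} gives $\rho(Y_L) = L^{\mathrm{IRW}}$ for $\rho = \rho_c \tensor \cdots \tensor \rho_c$. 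Because $\rho_j = \rho_c$ and $\si_j = \rho_c \circ \te$ both act on $\mathcal H_c = \ell^2(\N, w_c)$, the two state spaces coincide with $\N^N$ and the two measures agree; hence the conclusion of Theorem \ref{thm:duality} will be a self-duality rather than a duality between distinct processes, with duality function $\prod_{j=1}^N C(n_j,x_j;c)$ coming from the kernel $K_j = C(\cdot,\cdot;c)$.

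Before applying the theorem I must check its two standing hypotheses. First, $Y_L$ is self-adjoint: using $a^* = a^\dagger$ and $(a^\dagger)^* = a$ one computes $Y^* = (a \tensor 1 - 1 \tensor a)(1 \tensor a^\dagger - a^\dagger \tensor 1) = Y$, and since $c>0$ is real it follows that $Y_L^* = Y_L$. This is exactly the point that lets the theorem conclude a duality between $\rho(Y_L)$ and $\si(Y_L)$, rather than between $\rho(Y_L^*)$ and $\si(Y_L)$. Second, Proposition \ref{prop:intertwiner Charlier} supplies the required unitary intertwiner with kernel $C(n,x;c)$ satisfying the intertwining relation \eqref{eq:rhoC=siC}, which is precisely condition \eqref{eq:rho d=si d} on the dense domain $F_0(\N)$ of finitely supported functions.

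The key identification is $\si(Y_L) = \rho(Y_L)$, which is what upgrades the duality to a self-duality. With $\si = \rho \circ (\te \tensor \cdots \tensor \te)$ and $\te$ fixing the unit, each $Y_{i,j}$ is carried to $(\te\tensor\te(Y))_{i,j}$. By Lemma \ref{lem:Y->Y+R}, $\te \tensor \te(Y) = Y + R$ with $R$ vanishing in $\rho_c \tensor \rho_c$; since $R_{i,j}$ acts as the identity off the $i$-th and $j$-th slots, also $\rho(R_{i,j}) = 0$, whence $\si(Y_{i,j}) = \rho(Y_{i,j})$ for every $i<j$. Summing gives $\si(Y_L) = \rho(Y_L) = L^{\mathrm{IRW}}$, and Theorem \ref{thm:duality} then yields the stated self-duality. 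I do not expect a genuine obstacle here: all the substantive work is contained in Lemmas \ref{lem:L IRW=sum Y} and \ref{lem:Y->Y+R} and Proposition \ref{prop:intertwiner Charlier}, and the only point requiring a little care is the self-adjointness bookkeeping that guarantees Theorem \ref{thm:duality} produces a two-sided self-duality.
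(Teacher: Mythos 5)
Your proposal is correct and follows essentially the same route as the paper: identify $L^{\mathrm{IRW}}$ with $c^{-1}\sum_{i<j}\rho(Y_{i,j})$ via Lemma \ref{lem:L IRW=sum Y}, use the Charlier intertwiner of Proposition \ref{prop:intertwiner Charlier}, and use Lemma \ref{lem:Y->Y+R} to conclude $\si(Y_{i,j})=\rho(Y_{i,j})$ so that Theorem \ref{thm:duality} yields a self-duality. Your explicit check that $Y^*=Y$ (and hence $Y_L^*=Y_L$) is a hypothesis of Theorem \ref{thm:duality} that the paper leaves implicit, so making it explicit is a small but welcome addition rather than a departure.
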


\subsection{Hermite polynomials and duality between IRW and the diffusion process}
The Hermite polynomials are defined by
\[
H_n(x) = (2x)^n \rFs{2}{0}{-\frac{n}{2}, -\frac{n-1}{2}}{\mhyphen}{-\frac{1}{x^2}}.
\]
They form an orthogonal basis for $L^2(\R, e^{-x^2} dx)$, with orthogonality relations
\[
\frac{1}{\sqrt{\pi}}\int_\R H_m(x) H_n(x) e^{-x^2}\, dx = \de_{mn} 2^n n!,
\]
and they have the following lowering and raising properties
\begin{equation} \label{eq:diff Hn}
\begin{gathered}
\frac{d}{dx}H_n(x) = 2n H_{n-1}(x), \\ \left(-\frac{d}{dx}+2x\right)H_n(x) = H_{n+1}(x).
\end{gathered}
\end{equation}

With the lowering and raising operators for the Hermite polynomials we can realize $a$ and $a^\dagger$ as differential operators.
We define
\[
H(n,x;c) = e^{\frac{c}{2}} (2c)^{-\frac{n}{2}} H_n\left( \tfrac{x}{\sqrt{2c}} \right).
\]
Using the representation $\rho$ \eqref{eq:representation h} and the differential operators \eqref{eq:diff Hn} we find the following result.
\begin{lemma}  \label{eq:rhoc on H}
The Hermite polynomials $H(n,x)$ satisfy
\begin{gather*}
[\rho_c(a)H(\cdot,x)](n) = c\frac{\partial}{\partial x} H(n,x),\\
[\rho_c(a^\dagger) H(\cdot,x)](n) = \left(x-c \frac{\partial}{\partial x} \right)H(n,x).
\end{gather*}
\end{lemma}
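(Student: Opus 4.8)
The plan is to verify both identities by direct computation, starting from the explicit action of $\rho_c$ on the left-hand side and matching it against the differential operator on the right-hand side. The only structural input needed is the pair of lowering and raising formulas \eqref{eq:diff Hn} for the Hermite polynomials, together with the chain rule to account for the rescaling $x \mapsto x/\sqrt{2c}$ built into the definition of $H(n,x)$.

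For the first identity, I would compute $[\rho_c(a)H(\cdot,x)](n) = nH(n-1,x)$ from the definition \eqref{eq:representation h} and compare it with $c\,\partial_x H(n,x)$. Writing $y = x/\sqrt{2c}$, the chain rule gives $\partial_x H(n,x) = e^{c/2}(2c)^{-n/2}(2c)^{-1/2}H_n'(y)$, and substituting the lowering formula $H_n'(y) = 2nH_{n-1}(y)$ turns the right-hand side into a multiple of $H_{n-1}(y)$. Collecting the powers of $2c$ and the prefactor $c$ then converts this into $n\,e^{c/2}(2c)^{-(n-1)/2}H_{n-1}(y)$, which is precisely $nH(n-1,x)$, as required.

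For the second identity, I would again start from $[\rho_c(a^\dagger)H(\cdot,x)](n) = cH(n+1,x)$ and expand $(x - c\,\partial_x)H(n,x)$. After extracting the common normalization factor $e^{c/2}(2c)^{-n/2}$, the bracketed expression becomes proportional to $yH_n(y) - nH_{n-1}(y)$. Here the key step is to combine the two formulas in \eqref{eq:diff Hn}: eliminating the derivative yields the three-term relation $2yH_n(y) = H_{n+1}(y) + 2nH_{n-1}(y)$, hence $yH_n(y) - nH_{n-1}(y) = \tfrac12 H_{n+1}(y)$. Substituting this and tidying up the half-integer powers of $2c$ matches $cH(n+1,x)$ exactly.

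There is no genuine obstacle here: the argument is a routine but slightly delicate bookkeeping exercise. The main thing to watch is the interplay between the rescaling factor $1/\sqrt{2c}$ coming from the chain rule and the shift by one unit in $n$, which changes the normalization from $(2c)^{-n/2}$ to $(2c)^{-(n\pm 1)/2}$; keeping track of these half-integer powers of $2c$ is where an error would most likely creep in.
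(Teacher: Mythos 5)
Your proof is correct and is essentially the paper's own argument: the paper states the lemma as an immediate consequence of the representation formulas \eqref{eq:representation h} and the Hermite lowering/raising relations \eqref{eq:diff Hn}, which is exactly the direct computation you carry out, and your bookkeeping of the rescaling $y=x/\sqrt{2c}$ and the half-integer powers of $2c$ checks out in both identities. (The only cosmetic difference is that for the second identity you detour through the three-term recurrence $2yH_n = H_{n+1}+2nH_{n-1}$ rather than applying the raising formula $(-\tfrac{d}{dy}+2y)H_n = H_{n+1}$ directly, but these are equivalent given the lowering formula.)
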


Next we define an unbounded $*$-representation $\si_c$ of $\mathfrak h$ on the Hilbert space $\mathrm H_c= L^2(\R, w(x;c)dx)$, where
\[
w(x;c) = \frac{e^{-\frac{x^2}{2c}}}{\sqrt{2c\pi}}.
\]
The Hermite polynomials $H(n,x)$ form an orthogonal basis for $\mathrm H_c$, with squared norm $\|H(n,\cdot)\|^2 = \frac1{w_c(n)}$. We define the representation $\si_c$ by
\[
\begin{split}
[\si_c(a) f](x) &= xf(x) - c \frac{\partial }{\partial x}f(x),\\
[\si_c(a^\dagger) f](x) &= c \frac{ \partial }{\partial x}f(x),\\
[\si_c(Z)f](x) &= cf(x).
\end{split}
\]
As a dense domain we take the set of polynomials $\mathcal P$.
\begin{prop}
Define $\La:F_0(\N)\to F(\R)$ by
\[
(\La f)(x) = \sum_{n \in \N} w_c(n) f(n) H(n,x;c),
\]
then $\La$ extends to a unitary operator $\La:\mathcal H_c \to \mathrm H_c$ intertwining $\rho_c$ with $\si_c$. Furthermore, the kernel $H(n,x)$ satisfies
\[
[\rho_c(X^*)H(\cdot,x)](n) = [\si_c(X) H(n,\cdot)](x), \qquad X \in \mathfrak h.
\]
\end{prop}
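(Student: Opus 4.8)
The structure exactly parallels Proposition~\ref{prop:intertwiner Charlier}, so I would model the argument on it. There are two things to establish: that $\La$ is unitary from $\mathcal H_c$ onto $\mathrm H_c$, and that the kernel $H(n,x)$ satisfies the intertwining relation $[\rho_c(X^*)H(\cdot,x)](n) = [\si_c(X) H(n,\cdot)](x)$ for all $X \in \mathfrak h$. Once both hold, Lemma~\ref{lem:unitary equivalence} gives that $\La$ intertwines $\rho_c$ with $\si_c$, completing the proof.

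\emph{Unitarity.} First I would compute $\La$ on the cheap duality basis. The functions $\de_m(n) = \de_{m,n}/w_c(n)$ form an orthogonal basis of $\mathcal H_c$ with $\|\de_m\|^2 = 1/w_c(m)$, and applying $\La$ gives
\[
(\La \de_m)(x) = H(m,x;c).
\]
The stated fact that the Hermite polynomials $H(n,x)$ form an orthogonal basis of $\mathrm H_c$ with $\|H(n,\cdot)\|^2 = 1/w_c(n)$ (which follows from the Hermite orthogonality relations after the change of variable $x \mapsto x/\sqrt{2c}$ and the normalization in the definition of $H(n,x;c)$) then shows that $\La$ sends an orthogonal basis to an orthogonal basis preserving norms. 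Hence $\La$ extends to a unitary $\mathcal H_c \to \mathrm H_c$, exactly as in the Charlier case.

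\emph{The intertwining relation.} It suffices to verify the identity for the generators $X = a, a^\dagger, Z$. Using $a^* = a^\dagger$, $(a^\dagger)^* = a$ together with Lemma~\ref{eq:rhoc on H}, I would compute the left-hand sides:
\[
[\rho_c(a^*) H(\cdot,x)](n) = \left(x - c\tfrac{\partial}{\partial x}\right) H(n,x), \qquad
[\rho_c((a^\dagger)^*) H(\cdot,x)](n) = c\tfrac{\partial}{\partial x} H(n,x),
\]
and these match $[\si_c(a) H(n,\cdot)](x)$ and $[\si_c(a^\dagger) H(n,\cdot)](x)$ respectively, by the definition of $\si_c$. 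The action of $Z$ reduces on both sides to multiplication by $c$, so that case is immediate.

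\emph{Main obstacle.} The computational content is mild: the raising and lowering relations \eqref{eq:diff Hn}, repackaged through the normalization in Lemma~\ref{eq:rhoc on H}, hand us precisely the operators appearing in $\si_c$, so the verification is essentially a matching of two short lists. The only point requiring genuine care is domain bookkeeping on the unbounded side---checking that $\La$ really carries the finitely supported functions $F_0(\N)$ into the polynomials $\mathcal P$ (the stated domain of $\si_c$) so that the formal intertwining on a dense domain is legitimate before passing to the closure. This is the analogue of the step that is glossed in the Charlier proof, and it is where I would be slightly more explicit: since $\La \de_m = H(m,\cdot)$ is a polynomial and $F_0(\N)$ is the span of the $\de_m$, we have $\La(F_0(\N)) \subset \mathcal P = \mathcal D_2$, so the hypothesis $\La(\mathcal D_1) \subset \mathcal D_2$ of Lemma~\ref{lem:unitary equivalence} holds and the conclusion follows.
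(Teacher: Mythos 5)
Your proof is correct and follows exactly the paper's argument: unitarity via mapping the cheap basis $\de_m$ to the orthogonal basis $H(m,\cdot)$ with matching norms (as in Proposition \ref{prop:intertwiner Charlier}), the kernel relation from Lemma \ref{eq:rhoc on H} with $a^*=a^\dagger$, and Lemma \ref{lem:unitary equivalence} to conclude. Your added remark that $\La(F_0(\N))\subset\mathcal P$ is a small explicit check the paper leaves implicit, but it does not change the route.
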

\begin{proof}
Unitarity of $\La$ is proved in the same way as in Proposition \ref{prop:intertwiner Charlier}. The intertwining property for the kernel follows from Lemma \ref{eq:rhoc on H}. Lemma \ref{lem:unitary equivalence} then shows that $\La$ intertwines $\rho_c$ and $\si_c$.
\end{proof}

Similar as in Lemma \ref{lem:L IRW=sum Y} we find that the generator $L^{\mathrm{DIF}}$ defined by \eqref{eq:L DIF} is the realization of $Y$ define by \eqref{eq:Y in h} on the Hilbert space $\mathrm H_{c}^{\tensor N}$.
\begin{lemma} \label{lem:L DIF=sum Y}
For $c >0$ define $\si = \si_{c}\tensor \cdots \tensor \si_{c}$, then
\[
L^{\mathrm{DIF}} = c^{-1}\sum_{1 \leq i<j\leq N} \si_c(Y_{i,j}).
\]
\end{lemma}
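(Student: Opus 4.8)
The plan is to follow the proof of Lemma~\ref{lem:L IRW=sum Y} line by line, with the discrete representation $\rho_c$ replaced by the differential-operator representation $\si_c$. Both $L^{\mathrm{DIF}}$ in \eqref{eq:L DIF} and $\sum_{i<j}\si(Y_{i,j})$ have the sum-over-pairs structure \eqref{eq:L=sum Lij}, so it suffices to compute $(\si_c\tensor\si_c)(Y)$ acting on functions of two variables $x_1,x_2$ and to check that $c^{-1}$ times the result equals the $(i,j)=(1,2)$ summand of \eqref{eq:L DIF}.

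First I would realize the two factors of $Y$ in \eqref{eq:Y in h} separately. Writing $D=\partial_{x_1}-\partial_{x_2}$ and inserting the explicit formulas for $\si_c$, the factor $1\tensor a-a\tensor 1$ becomes the first-order operator $cD-(x_1-x_2)$, while the factor $a^\dagger\tensor 1-1\tensor a^\dagger$ becomes $cD$. Because the representation is an algebra homomorphism, $(\si_c\tensor\si_c)(Y)$ is the composition of these two operators, taken in the order dictated by \eqref{eq:Y in h}.

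Next I would perform the composition: applying $cD$ first and then $cD-(x_1-x_2)$ gives
\[
(\si_c\tensor\si_c)(Y)f = c^2D^2f-c(x_1-x_2)\,Df.
\]
Multiplying by $c^{-1}$ produces $c(\partial_{x_1}-\partial_{x_2})^2f-(x_1-x_2)(\partial_{x_1}-\partial_{x_2})f$, which is precisely the $(1,2)$ term of \eqref{eq:L DIF}. Summing over all pairs $i<j$ then yields the claim.

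The computation is routine, and I expect no genuine obstacle; the only point demanding care is operator ordering. Since $a$ is represented by an operator mixing multiplication by $x$ with differentiation, and $a,a^\dagger$ do not commute, one cannot expand $Y$ carelessly. The factored form of $Y$ is exactly what keeps the composition clean: in the product the multiplication by $x_1-x_2$ stands to the left of the differentiations, so no product-rule cross terms arise, and the second-order term $c^2D^2$ together with the drift term $-c(x_1-x_2)D$ can be read off directly.
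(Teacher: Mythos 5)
Your proposal is correct and is exactly the computation the paper intends: the paper omits the proof, saying only that it is ``similar as in Lemma \ref{lem:L IRW=sum Y}'', and your reduction to the pair $(1,2)$, the identifications $1\tensor a - a\tensor 1 \mapsto cD-(x_1-x_2)$ and $a^\dagger\tensor 1 - 1\tensor a^\dagger \mapsto cD$, and the composition $[cD-(x_1-x_2)]\circ cD = c^2D^2 - c(x_1-x_2)D$ carry out precisely that analogue. Your attention to the operator ordering is well placed: since $\si(AB)=\si(A)\si(B)$ means $\si(B)$ acts first, the multiplication by $x_1-x_2$ indeed sits to the left of all differentiations and no product-rule terms appear, matching the $(1,2)$ summand of \eqref{eq:L DIF} after multiplying by $c^{-1}$.
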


Finally, applying Theorem \ref{thm:duality} we obtain duality between $L^{\mathrm{IRW}}$ and $L^{\mathrm{DIF}}$, with duality function given by Hermite polynomials.
\begin{theorem}
$L^{\mathrm{IRW}}$ and $L^{\mathrm{DIF}}$ are in duality, with duality function given by
\[
\prod_{j=1}^N H(n_j,x_j;c).
\]
\end{theorem}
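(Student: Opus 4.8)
The plan is to obtain this theorem as an immediate application of Theorem \ref{thm:duality}, since all the structural ingredients have by now been assembled: the two generators are realizations of one and the same enveloping-algebra element in two unitarily equivalent $*$-representations. Concretely, I would take $\mathfrak g = \mathfrak h$, the tensor product representations $\rho = \rho_c \tensor \cdots \tensor \rho_c$ on $\mathcal H_c^{\tensor N}$ and $\si = \si_c \tensor \cdots \tensor \si_c$ on $\mathrm H_c^{\tensor N}$, together with their product measures, and the single element
\[
Y_L = c^{-1}\sum_{1 \le i < j \le N} Y_{i,j} \in U(\mathfrak h)^{\tensor N}
\]
built from $Y$ of \eqref{eq:Y in h}. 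The kernel attached to each tensor factor is $K_j(n,x) = H(n,x;c)$.

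First I would check that $Y_L$ is self-adjoint, as the set-up preceding Theorem \ref{thm:duality} requires. Writing $A = 1 \tensor a - a \tensor 1$ and using $a^* = a^\dagger$, one has $a^\dagger \tensor 1 - 1 \tensor a^\dagger = -A^*$, so that $Y = -AA^*$ is self-adjoint; hence each $Y_{i,j}$, and the real-linear combination $Y_L$, is self-adjoint. Next I would invoke Lemma \ref{lem:L IRW=sum Y} and Lemma \ref{lem:L DIF=sum Y}, which identify $\rho(Y_L) = L^{\mathrm{IRW}}$ and $\si(Y_L) = L^{\mathrm{DIF}}$, so that the two generators are precisely the operators $L_1$ and $L_2$ appearing in Theorem \ref{thm:duality}.

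It then remains only to verify the intertwining hypothesis \eqref{eq:rho d=si d} for each factor, namely $[\rho_c(X^*)H(\cdot,x)](n) = [\si_c(X) H(n,\cdot)](x)$ for all $X \in \mathfrak h$; but this is exactly the content of the preceding Proposition, where it is reduced to the generators $X = a, a^\dagger, Z$ by means of the raising and lowering relations \eqref{eq:diff Hn}. With this in hand, Theorem \ref{thm:duality} delivers the duality of $L^{\mathrm{IRW}}$ and $L^{\mathrm{DIF}}$ with duality function $\prod_{j=1}^N H(n_j,x_j;c)$. Unlike the self-duality argument, no analogue of Lemma \ref{lem:Y->Y+R} is needed here, because the intertwiner links $\rho_c$ and $\si_c$ directly rather than through the twist $\te$, so no correction term $R$ arises. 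Consequently the proof is essentially a corollary, and the only point requiring genuine care is domain bookkeeping — ensuring the formal manipulations of Theorem \ref{thm:duality} remain legitimate on the dense domains of finitely supported functions and of polynomials — which I expect to be the mildest of obstacles rather than a true difficulty.
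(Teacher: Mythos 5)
Your proposal is correct and is essentially identical to the paper's own argument: the theorem there is stated as an immediate consequence of Theorem \ref{thm:duality}, combining the unitary intertwiner with kernel $H(n,x;c)$ (and its property $[\rho_c(X^*)H(\cdot,x)](n) = [\si_c(X)H(n,\cdot)](x)$) with Lemmas \ref{lem:L IRW=sum Y} and \ref{lem:L DIF=sum Y}, and indeed no analogue of Lemma \ref{lem:Y->Y+R} is needed. Your explicit check that $Y = -AA^*$ is self-adjoint is a small detail the paper leaves implicit, but it does not change the route.
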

\begin{remark}
This duality between $L^{\mathrm{IRW}}$ and $L^{\mathrm{DIF}}$ was also obtained in \cite[Remark 3.1]{GKRV}, but Hermite polynomials are not mentioned there. Hermite polynomials of even degree have appeared as duality functions in \cite[\S4.1.1]{FG}; this can be considered as a special case of duality involving Laguerre polynomials, see \cite[\S4.2.1]{FG} or Theorem \ref{thm:SIP - BEP}.
\end{remark}

\subsection{The exponential function and self-duality of the diffusion process}

To show self-duality of the differential operator $L^{\mathrm{DIF}}$, the following isomorphism is useful.
\begin{lemma}
The assignments
\[
\te(a) = \frac{1}{2}(a-a^\dagger), \qquad \te(a^\dagger) = i(a+a^\dagger), \qquad \te(Z) = iZ,
\]
extend uniquely to a Lie algebra isomorphism $\te:\mathfrak h \to \mathfrak h$.
\end{lemma}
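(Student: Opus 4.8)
The plan is to follow exactly the strategy of Lemma \ref{lem:te h-> h}. Since $\mathfrak h$ is three-dimensional with basis $\{a, a^\dagger, Z\}$, the assignments determine a unique linear map $\te:\mathfrak h \to \mathfrak h$; the only things left to verify are that this linear map respects the bracket and that it is invertible.

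First I would check that $\te$ preserves the defining commutation relations \eqref{eq:commutations heisenberg}. By bilinearity of the Lie bracket it suffices to verify $\te([X,Y]) = [\te(X),\te(Y)]$ on the three pairs of basis generators. The two relations involving the central element $Z$ are immediate: since $[\cdot, Z]$ annihilates both $a$ and $a^\dagger$, the right-hand sides $[\te(a),\te(Z)]$ and $[\te(a^\dagger),\te(Z)]$ reduce to scalar multiples of $[a,Z]$ and $[a^\dagger,Z]$, which both vanish, matching $\te([a,Z]) = \te([a^\dagger,Z]) = 0$.

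The one substantive check is the relation $[a^\dagger, a] = Z$. Here I would expand, using $[a,a^\dagger] = -Z$ and that $a, a^\dagger$ commute with themselves,
\[
[\te(a^\dagger),\te(a)] = \left[i(a+a^\dagger),\, \tfrac12(a-a^\dagger)\right] = \tfrac{i}{2}\big([a^\dagger, a] - [a, a^\dagger]\big) = \tfrac{i}{2}(Z + Z) = iZ = \te(Z).
\]
This is the step where the chosen coefficients $\tfrac12$ and $i$ must conspire to reproduce exactly $\te(Z) = iZ$, so it is the heart of the computation; the only point requiring care is the sign bookkeeping, and there is no genuine obstacle.

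Finally, to upgrade $\te$ from a homomorphism to an isomorphism, I would note that a bracket-preserving linear self-map of a finite-dimensional Lie algebra is an isomorphism as soon as it is bijective, and bijectivity follows from nonvanishing of its determinant. Writing the matrix of $\te$ in the ordered basis $(a, a^\dagger, Z)$, the central direction decouples and contributes a factor $i$, giving determinant $i\cdot\det\left(\begin{smallmatrix} \tfrac12 & i \\ -\tfrac12 & i \end{smallmatrix}\right) = i\cdot i = -1 \neq 0$. Hence $\te$ is invertible and therefore a Lie algebra isomorphism.
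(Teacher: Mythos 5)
Your proof is correct and follows essentially the same route as the paper, whose proof is simply the direct verification of the commutation relations \eqref{eq:commutations heisenberg}; your bracket computation $[\te(a^\dagger),\te(a)] = iZ = \te(Z)$ is exactly the calculation the paper leaves implicit. The additional determinant check of bijectivity is a small but worthwhile supplement that the paper omits as obvious.
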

\begin{proof}
We just need to check commutation relations, which is a direct calculation.
\end{proof}
Observe that in the representation $\si_c$, $\te(a)$ and $\te(a^\dagger)$ are the operators
\[
\si_c(\te(a)) = \frac{x}{2}-c\frac{\partial}{\partial x}, \qquad \si_c(\te(a^\dagger)) = ix.
\]
The kernel of the (yet to be defined) intertwining operator is the exponential function
\[
\phi(x,y;c) = \exp\left(\frac{x^2+y^2}{4c}-\frac{ixy}{c}\right), \qquad x,y \in \R.
\]

\begin{lemma} \label{lem:sic on phi}
The function $\phi(x,y)$ satisfies
\[
\begin{split}
[\si_c(\te(a)) \phi(\cdot,y)](x) & =  iy\, \phi(x,y),\\
[\si_c(\te(a^\dagger)) \phi(\cdot,y)](x) & = \left(\frac{y}{2}-c\frac{\partial}{\partial y}\right)\phi(x,y),
\end{split}
\]
\end{lemma}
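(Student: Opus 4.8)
The plan is to verify both identities by direct differentiation, exploiting the fact that $\phi$ is a Gaussian-type exponential whose exponent $\frac{x^2+y^2}{4c}-\frac{ixy}{c}$ is symmetric under the interchange $x \leftrightarrow y$. Concretely, the chain rule gives
\[
\frac{\partial}{\partial x}\phi(x,y) = \left(\frac{x}{2c}-\frac{iy}{c}\right)\phi(x,y), \qquad \frac{\partial}{\partial y}\phi(x,y) = \left(\frac{y}{2c}-\frac{ix}{c}\right)\phi(x,y),
\]
and these two expressions are images of one another under $x \leftrightarrow y$, which is precisely why the two statements of the lemma mirror each other.

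For the first identity I would insert $\frac{\partial}{\partial x}\phi$ into the differential operator $\si_c(\te(a)) = \frac{x}{2}-c\frac{\partial}{\partial x}$ recorded just before the lemma. Multiplying out, the term $\frac{x}{2}\phi$ cancels exactly against the contribution $-c \cdot \frac{x}{2c}\phi = -\frac{x}{2}\phi$ coming from the derivative, and what survives is $-c \cdot \left(-\frac{iy}{c}\right)\phi = iy\,\phi$, as claimed.

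For the second identity the left-hand side is immediate, since $\si_c(\te(a^\dagger)) = ix$ acts by multiplication, giving $ix\,\phi$. I would then evaluate the right-hand side $\left(\frac{y}{2}-c\frac{\partial}{\partial y}\right)\phi$ using the formula for $\frac{\partial}{\partial y}\phi$ above; by the same cancellation of the $\frac{y}{2}\phi$ terms, one is left with $ix\,\phi$, matching the left-hand side.

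The computation has no genuine obstacle; the only point requiring care is bookkeeping the factors of $c$ and the signs of the imaginary terms so that the quadratic-in-$x$ (respectively quadratic-in-$y$) contributions cancel and only the cross term remains. The symmetry of the exponent of $\phi$ makes the second verification essentially a transcription of the first with $x$ and $y$ exchanged.
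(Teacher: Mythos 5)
Your proof is correct and follows essentially the same route as the paper: compute $c\frac{\partial}{\partial x}\phi = \left(\frac{x}{2}-iy\right)\phi$, apply the operator $\si_c(\te(a)) = \frac{x}{2}-c\frac{\partial}{\partial x}$ to get the first identity, and obtain the second from $\si_c(\te(a^\dagger))=ix$ together with the $x \leftrightarrow y$ symmetry of $\phi$. The only cosmetic difference is that you also write out $\frac{\partial}{\partial y}\phi$ explicitly where the paper simply invokes the symmetry.
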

\begin{proof}
From
\[
c \frac{\partial}{\partial x}\phi(x,y) = \left(\frac{x}{2}-iy\right)\phi(x,y)
\]
we obtain
\[
[\si_c(\te(a)) \phi(\cdot,y)](x) = \left(\frac{x}{2}-c\frac{\partial}{\partial x}\right) \phi(x,y) = iy\, \phi(x,y).
\]
Using symmetry in $x$ and $y$, we find
\[
[\si_c(\te(a^\dagger)) \phi(\cdot,y)](x) = ix\,\phi(x,y) =\left(\frac{y}{2}-c\frac{\partial }{\partial y}\right)\phi(x,y). \qedhere
\]
\end{proof}
Now we can show that the integral operator with $\phi$ as a kernel is the desired intertwining operator.
\begin{prop}
Define $\La:\mathcal P\to F(\R)$ by
\[
(\La f)(y) =  \int_\R f(x) \phi(x,y;c) w(x;c)\, dx,
\]
then $\La$ extends to a unitary operator $\La: \mathrm H_c \to \mathrm H_c$ intertwining $\si_c$ with $\si_c$. Furthermore, the kernel $\phi(x,y)$ satisfies
\[
[\si_c(X^*)\phi(\cdot,y)](x) = [\si_c(X) \phi(x,\cdot)](y), \qquad X \in \mathfrak h.
\]
\end{prop}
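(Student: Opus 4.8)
The plan is to reproduce the two-step scheme used in the proof of Proposition~\ref{prop:intertwiner Charlier} and its Hermite analogue: first check that the kernel $\phi$ satisfies the intertwining relation required by property~1 of Lemma~\ref{lem:unitary equivalence}, and then show that $\La$ is unitary. Once both hold, Lemma~\ref{lem:unitary equivalence} delivers simultaneously that $\La$ is a unitary intertwiner and that the kernel relation of the proposition is satisfied.

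To verify the kernel relation it suffices to treat the generators $X=a,a^\dagger,Z$, because property~1 of Lemma~\ref{lem:unitary equivalence} propagates from the generators to all of $U(\mathfrak h)$ by the same multiplicativity argument as in the proof of Theorem~\ref{thm:duality}. The case $X=Z$ is immediate, since $\si_c(Z)$ acts as multiplication by $c$ in either variable. For $X=a$ and $X=a^\dagger$ the required identities are read off from Lemma~\ref{lem:sic on phi} together with the manifest symmetry $\phi(x,y)=\phi(y,x)$ of the kernel: the symmetry interchanges a first-slot action with a second-slot action, and the two identities of Lemma~\ref{lem:sic on phi}, after unfolding $\te(a)=\tfrac12(a-a^\dagger)$ and $\te(a^\dagger)=i(a+a^\dagger)$, express the $x$-action of $\si_c(a^\dagger)$ and of $\si_c(a)$ on $\phi$ through the corresponding $y$-actions. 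Since $a^*=a^\dagger$ and $(a^\dagger)^*=a$, this is exactly the relation demanded by Lemma~\ref{lem:unitary equivalence}.

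For unitarity I would absorb the weight and write
\[
(\La f)(y)=\frac{e^{y^2/4c}}{\sqrt{2\pi c}}\int_\R f(x)\,e^{-x^2/4c}\,e^{-ixy/c}\,dx ,
\]
so that $\La$ is a Fourier transform of $x\mapsto f(x)e^{-x^2/4c}$ dressed by Gaussian factors. A direct Gaussian computation combined with the Plancherel theorem then yields $\|\La f\|_{\mathrm H_c}=\|f\|_{\mathrm H_c}$ for $f$ in the dense domain $\mathcal P$ (indeed for $f$ in the Schwartz class), so $\La$ is isometric, while Fourier inversion in the same weighted form produces a two-sided inverse of the same type and hence surjectivity. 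Thus $\La$ extends to a unitary operator on $\mathrm H_c$. As an alternative, mirroring Proposition~\ref{prop:intertwiner Charlier} more closely, one can apply $\La$ to the orthogonal Hermite basis $\{H(n,\cdot)\}$ of $\mathrm H_c$ and check that the images again form an orthogonal basis with the same norms.

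The main obstacle is the unitarity step. In the Charlier and Hermite cases $\La$ was a discrete sum pairing a function against an orthogonal polynomial basis, and unitarity followed directly from the orthogonality relations; here $\La$ is a genuine integral transform, so one must control the Gaussian integrals and their convergence on a dense domain and invoke Plancherel and Fourier inversion to pass to a unitary operator on all of $\mathrm H_c$. By contrast, the kernel relation reduces, via Lemma~\ref{lem:sic on phi} and the symmetry of $\phi$, to a short verification on the three generators.
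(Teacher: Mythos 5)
Your unitarity argument is fine, and is essentially the paper's proof (the paper simply says ``unitarity of $\La$ follows from unitarity of the Fourier transform''); your Hermite-basis alternative would also work. The fatal gap is in the other half. You claim that the identities $[\si_c(X^*)\phi(\cdot,y)](x)=[\si_c(X)\phi(x,\cdot)](y)$ for $X=a,a^\dagger$ can be ``read off'' from Lemma \ref{lem:sic on phi} together with the symmetry $\phi(x,y)=\phi(y,x)$, but you never carry out this unfolding, and if you do, it fails. Lemma \ref{lem:sic on phi} gives $c\frac{\partial}{\partial x}\phi=\bigl(\tfrac{x}{2}-iy\bigr)\phi$, so for $X=a$ the left-hand side is $[\si_c(a^\dagger)\phi(\cdot,y)](x)=\bigl(\tfrac{x}{2}-iy\bigr)\phi$, whereas the right-hand side is $[\si_c(a)\phi(x,\cdot)](y)=\bigl(y-c\tfrac{\partial}{\partial y}\bigr)\phi=\bigl(\tfrac{y}{2}+ix\bigr)\phi$; these are different functions. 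Worse, no nonzero kernel $K$ whatsoever can satisfy the proposition's displayed relation for both generators: for $X=a$ it reads $c\partial_x K=(y-c\partial_y)K$, for $X=a^\dagger$ it reads $(x-c\partial_x)K=c\partial_y K$, and adding the two identities forces $xK=yK$, i.e.\ $K\equiv 0$. So this step is not merely unproved in your write-up; it is irreparable.

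This is precisely why the paper's own proof does not attack the starred relation head-on. It establishes only the twisted identity $[\si_c(\te(X^*))\phi(\cdot,y)](x)=[\si_c(\te(X))\phi(x,\cdot)](y)$ --- which genuinely does follow from Lemma \ref{lem:sic on phi} and the $x\leftrightarrow y$ symmetry, since $\si_c(\te(a^\dagger))$ is multiplication by $ix$ and $\si_c(\te(a))\phi(\cdot,y)=iy\,\phi$ --- and then appeals to $\te$ being an isomorphism. Note that this last step is itself delicate: $\te$ does not preserve the $*$-structure (e.g.\ $\te(a^*)=i(a+a^\dagger)$ while $\te(a)^*=\tfrac12(a^\dagger-a)$), so substituting $X\mapsto\te^{-1}(X)$ in the twisted identity yields the kernel relation for the pulled-back involution $X\mapsto\te\bigl((\te^{-1}X)^*\bigr)$ --- the analogue of the $\star$-structure \eqref{eq:star-structure} used for the Bessel kernel --- and not for $*$ itself. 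In short, your proposal deviates from the paper exactly where it matters: the paper works (implicitly) with the $\te$-twisted involution, while you try to verify the literal $*$-relation on the generators, and that route is closed off by the computation above.
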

\begin{proof}
Unitarity of $\La$ follows from unitarity of the Fourier transform. From Lemma \ref{lem:sic on phi} we find
\[
[\si_c(\te(X^*))\phi(\cdot,y)](x) = [\si_c(\te(X)) \phi(x,\cdot)](y), \qquad X \in \mathfrak h,
\]
which proves the result, since $\te$ is an isomorphism.
\end{proof}

By applying Theorem \ref{thm:duality} and using Lemma \ref{lem:L DIF=sum Y} we obtain self-duality of $L^{\mathrm{DIF}}$ \eqref{eq:L DIF}.
\begin{theorem}
$L^{\mathrm{DIF}}$ is self-dual, with duality function given by
\[
\prod_{j=1}^N \phi(x_j,y_j;c).
\]
\end{theorem}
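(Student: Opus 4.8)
The plan is to deduce the statement as a direct application of Theorem \ref{thm:duality}, since all the substantive work has already been done in the preceding Proposition and in Lemma \ref{lem:L DIF=sum Y}. In the notation of Theorem \ref{thm:duality} I would take $\mathfrak g = \mathfrak h$ and, for every index $j \in \{1,\dots,N\}$, set both representations equal to $\si_c$ acting on $\mathrm H_c = L^2(\R, w(x;c)\,dx)$; thus $E_j = F_j = \R$, $\mu_j = \nu_j = w(x;c)\,dx$, and $\Om_1 = \Om_2 = \R^N$. The intertwiner in each factor is the unitary operator $\La$ constructed just above, whose kernel is $K_j(x,y) = \phi(x,y;c)$.

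First I would record that this kernel satisfies the required relation \eqref{eq:rho d=si d}, namely $[\si_c(X^*)\phi(\cdot,y)](x) = [\si_c(X)\phi(x,\cdot)](y)$ for all $X \in \mathfrak h$; this is exactly the content of the preceding Proposition, which rests in turn on Lemma \ref{lem:sic on phi}. With this in hand the hypotheses on the kernels in Theorem \ref{thm:duality} are met, with $\rho = \si = \si_c \tensor \cdots \tensor \si_c$.

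Next I would identify the algebra element driving the dynamics. By Lemma \ref{lem:L DIF=sum Y} one has $L^{\mathrm{DIF}} = c^{-1}\sum_{i<j}\si_c(Y_{i,j}) = \si(Y_L)$ with $Y_L = c^{-1}\sum_{i<j} Y_{i,j}$ and $Y$ as in \eqref{eq:Y in h}. To invoke Theorem \ref{thm:duality} I would verify that $Y_L$ is self-adjoint: expanding $Y = a^\dagger \tensor a + a \tensor a^\dagger - aa^\dagger \tensor 1 - 1 \tensor aa^\dagger$ and using $a^* = a^\dagger$ one finds $Y^* = Y$, and since the $*$-operation acts factorwise on $U(\mathfrak h)^{\tensor N}$ each $Y_{i,j}$ is self-adjoint, hence so is $Y_L$. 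Setting $L_1 = L_2 = \si(Y_L) = L^{\mathrm{DIF}}$, Theorem \ref{thm:duality} yields that $L^{\mathrm{DIF}}$ is in duality with itself, with duality function $\prod_{j=1}^N K_j(x_j,y_j) = \prod_{j=1}^N \phi(x_j,y_j;c)$, which is the claimed self-duality.

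The point requiring the most care is bookkeeping rather than a genuine obstacle: Theorem \ref{thm:duality} is phrased for two a priori distinct representations $\rho$ and $\si$, whereas here I specialize to the diagonal case $\rho = \si = \si_c^{\tensor N}$ with the \emph{same} Hilbert space on both sides. One should check that its proof—which uses only the factorwise intertwining property \eqref{eq:rho d=si d} together with the factorization $Y_{(j)} = Y_{j,1}\cdots Y_{j,k_j}$ of the enveloping-algebra factors into generators—goes through verbatim in this self-dual setting, as it does. Everything analytic, in particular unitarity of $\La$ and density of the domain $\mathcal P$, has already been settled in the preceding Proposition via the Fourier transform, so no further estimates are needed.
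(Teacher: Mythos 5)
Your proposal is correct and matches the paper's argument exactly: the paper also obtains this theorem as an immediate application of Theorem \ref{thm:duality} combined with Lemma \ref{lem:L DIF=sum Y}, using the unitary intertwiner with kernel $\phi(x,y;c)$ from the preceding Proposition. Your additional checks --- the self-adjointness $Y^*=Y$ of the element \eqref{eq:Y in h} and the remark that Theorem \ref{thm:duality} applies verbatim in the diagonal case $\rho=\si=\si_c\tensor\cdots\tensor\si_c$ --- are details the paper leaves implicit, not a different route.
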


\section{The Lie algebra $\su(1,1)$} \label{sec:su11}

In this section we use the Lie algebra $\su(1,1)$ to find duality functions for two stochastic processes.
The first one is the symmetric inclusion process SIP($k$), $k \in \R_{>0}^N$, which is a Markov jump process on $N$ sites, where each site can contain an arbitrary number of particles. Jumps between two sites, say $i$ and $j$, occur at a rate proportional to the number of particles $n_i$ and $n_j$. The generator of this process is given by
\begin{equation} \label{eq:L SIP}
L^{\mathrm{SIP}} f(n) = \sum_{1 \leq i < j \leq N} n_i(2k_j+n_j) \left(f(n^{i,j}) - f(n) \right) + n_j(2k_i+n_i) \left(f(n^{j,i}) - f(n) \right),
\end{equation}
with $n=(n_1,\ldots,n_N) \in \N^N$.

The second process is the Brownian energy process BEP($k$), $k \in \R_{>0}^N$, which is a Markov diffusion process that describes the evolution of a system of $N$ particles that exchange energies. The energy of particle $i$ is $x_i>0$. The generator is given by
\begin{equation} \label{eq:L BEP}
L^{\mathrm{BEP}} f(x) = \sum_{1 \leq i<j \leq N} x_i x_j \left( \frac{\partial}{\partial x_i}  - \frac{\partial}{\partial x_j} \right)^2f(x) - 2(k_i x_i - k_j x_j) \left( \frac{\partial}{\partial x_i}  - \frac{\partial}{\partial x_j}  \right) f(x),
\end{equation}
with $x=(x_1,\ldots,x_N) \in \R_{>0}^N$.\\

The Lie algebra $\mathfrak{sl}(2,\C)$ is generated by $H,E,F$ with commutation relations
\[
[H,E]=2E, \quad [H,F]=-2F, \quad [E,F]=H.
\]
The Lie algebra $\su(1,1)$ is $\mathfrak{sl}(2,\C)$ equipped with the $\ast$-structure
\[
H^*=H, \quad E^*=-F, \quad F^*=-E.
\]
The Casimir element $\Om$ is a central self-adjoint element of the universal
enveloping algebra $U\big(\su(1,1)\big)$ given by
\begin{equation} \label{Casimir}
 \Omega = \frac12H^2 + EF + FE.
\end{equation}
Note that $\Om^*=\Om$.\\

We consider the following representation of $\su(1,1)$. Let $k>0$ and $0<c<1$. The
representation space is the weighted $L^2$-space $\mathcal H_{k,c}=\ell^2(\N,w_{k,c})$ consisting of functions on $\N$ which have finite norm with respect to the inner product
\[
\langle f,g\rangle = \sum_{n \in \N} w_{k,c}(n)\, f(n) \overline{g(n)}, \qquad w_{k,c}(n) =  \frac{ (2k)_n }{n!}c^n (1-c)^{2k}.
\]
The actions of the generators are given by
\begin{equation} \label{eq:actions HEF}
\begin{split}
[\pi_{k,c}(H) f](n) =&\ 2(k+n) f(n),  \\ [\pi_{k,c}(E)f] (n) =&\
\frac{n}{\sqrt{c}} f(n-1),  \\ [\pi_{k,c}(F)f] (n) =&\ -\sqrt{c}\,(2k+n)
f(n+1),
\end{split}
\end{equation}
where $f(-1)=0$.
This defines an unbounded $*$-representation on $\mathcal H_{k,c}$, with dense domain $F_0(\N)$. Note that $[\pi_{k,c}(\Om) f](n) = 2k(k-1) f(n)$.

\begin{remark}
For $0<c_1,c_2<1$ define a unitary operator $I:\mathcal H_{k,c_1}\to \mathcal H_{k,c_2}$ by
\[
(If)(n) = \left( \frac{c_1}{c_2}\right)^{n/2} f(n).
\]
Then $I\circ \pi_{k,c_1} = \pi_{k,c_2} \circ I$, so for fixed $k>0$ all representations $\pi_{k,c}$, $0<c<1$, are unitarily equivalent (we can even take $c \geq 1$ if we omit the factor $(1-c)^{2k}$ from the weight function $w_{k,c}$). From here on we assume that $c$ is a fixed parameter, and just write $\pi_k$ and $\mathcal H_k$ instead of $\pi_{k,c}$ and $\mathcal H_{k,c}$.
\end{remark}

The generator $L^{\mathrm{SIP}}$ is related to the Casimir $\Om$. Recall that the coproduct $\De$ is given by $\De(X)  = 1\tensor X + X \tensor 1$, and $\De$ extends as an algebra morphism to $U(\su(1,1))$. This gives
\[
\De(\Om) = 1\tensor \Om + \Om \tensor 1 + H \tensor H + 2F \tensor E + 2E \tensor F.
\]
We set
\begin{equation} \label{eq:Y}
Y = \frac12\Big(1\tensor \Om + \Om \tensor 1 - \De(\Om)\Big).
\end{equation}
The relation to the symmetric inclusion process is as follows.
\begin{lemma} \label{lem:LSIP su11}
For $k=(k_1,\ldots,k_N) \in \R_{>0}^N$ define $\pi_k = \pi_{k_1}\tensor \cdots \tensor \pi_{k_N}$, then
\[
L^{\mathrm{SIP}} =\sum_{1 \leq i<j \leq N} \pi_k (Y_{i,j}) + 2k_ik_j
\]
\end{lemma}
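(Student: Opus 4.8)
The plan is to follow the strategy used for Lemma~\ref{lem:L IRW=sum Y}. Both $L^{\mathrm{SIP}}$ and $\sum_{i<j}\pi_k(Y_{i,j})$ have the pairwise sum structure \eqref{eq:L=sum Lij}, so it suffices to prove the identity on functions of two variables: I would show that for each pair, $[(\pi_{k_1}\tensor\pi_{k_2})(Y)f](n_1,n_2)+2k_1k_2\,f(n_1,n_2)$ equals the $(i,j)=(1,2)$ summand of \eqref{eq:L SIP}. Summing the resulting two-variable identity over all pairs $i<j$ then gives the lemma, with the constants $2k_ik_j$ accumulating into the stated correction term.

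First I would make $Y$ explicit. Inserting the displayed expression for $\De(\Om)$ into the definition \eqref{eq:Y}, the central terms $1\tensor\Om$ and $\Om\tensor1$ cancel, leaving
\[
Y=-\tfrac12\,H\tensor H-F\tensor E-E\tensor F.
\]
Next I would evaluate each of these three terms on $f(n_1,n_2)$ using the representation formulas \eqref{eq:actions HEF}. The term $-\tfrac12 H\tensor H$ is diagonal and contributes $-2(k_1+n_1)(k_2+n_2)f(n_1,n_2)$. In the two mixed terms the factors $\sqrt c$ coming from $E$ and $F$ cancel, so $-F\tensor E$ contributes $n_2(2k_1+n_1)f(n_1+1,n_2-1)$ and $-E\tensor F$ contributes $n_1(2k_2+n_2)f(n_1-1,n_2+1)$. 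These two off-diagonal contributions are the two hopping terms of the $(i,j)=(1,2)$ summand of $L^{\mathrm{SIP}}$, carrying the rates $n_1(2k_2+n_2)$ and $n_2(2k_1+n_1)$.

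The only point needing care is the diagonal part, and this is where the constant $2k_ik_j$ enters. The two hopping rates force the diagonal coefficient of the SIP summand to be $-n_1(2k_2+n_2)-n_2(2k_1+n_1)$, while the $H\tensor H$ term produces $-2(k_1+n_1)(k_2+n_2)$; the two differ by exactly $-2k_1k_2$. Adding $2k_1k_2$ therefore reconciles the diagonal and reproduces the $(1,2)$ summand of \eqref{eq:L SIP} verbatim. Conceptually, this shift is the residue of having discarded the scalar central pieces $1\tensor\Om+\Om\tensor1$ (recall $\pi_k(\Om)=2k(k-1)$) in passing from $\De(\Om)$ to $Y$; a quick consistency check is that $(\pi_{k_1}\tensor\pi_{k_2})(Y)$ applied to the constant function returns $-2k_1k_2$, so that $\sum_{i<j}\pi_k(Y_{i,j})+2k_ik_j$ correctly annihilates constants, as a Markov generator must. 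The computation is otherwise routine; the main, and only minor, obstacle is keeping track of the argument shifts of the raising and lowering operators inside the tensor product.
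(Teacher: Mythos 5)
Your proposal is correct and follows essentially the same route as the paper: reduce to the two-site case $L_{1,2}=\pi_{k_1}\tensor\pi_{k_2}(Y)+2k_1k_2$ and verify by direct computation from \eqref{eq:Y} and \eqref{eq:actions HEF} that it reproduces the $(i,j)=(1,2)$ summand of \eqref{eq:L SIP}. You merely make explicit the steps the paper leaves implicit --- the simplification $Y=-\tfrac12 H\tensor H-F\tensor E-E\tensor F$, the cancellation of the $\sqrt{c}$ factors, and the diagonal bookkeeping producing the shift $2k_1k_2$ --- and your check that the resulting operator annihilates constants is a nice added sanity check.
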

\begin{proof}
Consider $L_{1,2}= \pi_{k_1}\tensor \pi_{k_2}(Y)+2k_1k_2$. It suffices to show that $L_{1,2}$ gives the term $(i,j) = (1,2)$ in \eqref{eq:L SIP}. From \eqref{eq:Y} and \eqref{eq:actions HEF} we find that $L_{1,2}$ acts on $f \in \mathcal H_{k_1} \tensor \mathcal H_{k_2}$ by
\[
\begin{split}
[L_{1,2} f](n_1,n_2) &= n_1(2k_2+n_2) [f(n_1-1,n_2+1) - f(n_1,n_2)] \\
& \quad + n_2(2k_1+n_1) [f(n_1+1,n_2-1) - f(n_1,n_2)],
\end{split}
\]
which is the required expression.
\end{proof}

In order to obtain duality functions we consider eigenfunctions of the following self-adjoint element as in \cite{KJ1}:
\begin{equation} \label{eq:Xa}
X_a = -a H + E - F \in \su(1,1), \qquad a \in \R.
\end{equation}
Depending on the value of $a$ this is either an elliptic element ($|a|>1$), parabolic element ($|a|=1$), or hyperbolic element ($|a|<1$), corresponding to the associated one-parameter subgroups in $\mathrm{SU}(1,1)$.

\subsection{Meixner polynomials and self-duality for SIP} \label{ssec:Meixner}
The Meixner polynomials are
defined by
\begin{equation} \label{Meixner}
M_n(x;\beta,c) = \rFs{2}{1}{-n,-x}{\beta}{1-\frac{1}{c}}.
\end{equation}
These are self-dual: $M_n(x;\be,c) = M_x(n;\be,c)$ for $x \in \N$.
For $\beta>0$ and $0<c<1$, the Meixner polynomials are orthogonal with respect to a positive measure on $\N$,
\[
\sum_{x=0}^\infty \frac{(\be)_x c^x} { x! } M_m(x) M_n(x) = \de_{mn} \frac{ c^{-n} n!}{(\be)_n (1-c)^\be},
\]
and the polynomials form a basis for the corresponding Hilbert space.
The three-term recurrence relation for the Meixner polynomials is
\[
(c-1)(x+\tfrac12\be) M_n(x) = c(n+\be) M_{n+1}(x) - (c+1)(n+\tfrac12\be) M_n(x) + n M_{n-1}(x).
\]
Using the self-duality this also gives a difference equation in the $x$-variable for the Meixner polynomials. \\

We set $a(c) = \frac{1+c}{2\sqrt{c}}$, so that $a(c)>1$. The action of
\[
X_{a(c)}= - \frac{1+c}{2\sqrt{c}}H+E-F
\]
on $f \in \mathcal H_{k}$ is given by
\[
[\pi_k(X_{a(c)}) f](n) = \sqrt{c}(2k+n) f(n+1) - \frac{1+c}{\sqrt{c}} (k+n) f(n) + \frac{n}{\sqrt{c}} f(n-1).
\]
\begin{lemma}
The Meixner polynomials $M(n,x;k,c) =  M_n(x;2k,c)$ are eigenfunctions of $\pi_k(X_{a(c)})$, i.e.
\[
[\pi_k(X_{a(c)}) M(\cdot,x)](n) = \frac{c-1}{\sqrt{c}}(x+k)\, M(n,x), \qquad x \in \N.
\]
\end{lemma}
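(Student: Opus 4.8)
The plan is to reduce the statement to the three-term recurrence relation for the Meixner polynomials recorded just above, which is the only external input needed. First I would substitute $f = M(\cdot,x) = M_{(\cdot)}(x;2k,c)$ into the explicit formula for the action of $\pi_k(X_{a(c)})$ in the $n$-variable, obtaining
\[
[\pi_k(X_{a(c)}) M(\cdot,x)](n) = \sqrt{c}\,(2k+n) M_{n+1}(x) - \tfrac{1+c}{\sqrt{c}}(k+n) M_n(x) + \tfrac{n}{\sqrt{c}} M_{n-1}(x),
\]
where I abbreviate $M_n(x) = M_n(x;2k,c)$. The key observation is that, after clearing the common factor $1/\sqrt{c}$, the right-hand side is exactly the three-term expression on the right of the recurrence relation specialized to $\be = 2k$ (so that $\tfrac12\be = k$).

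Next I would invoke that recurrence, namely
\[
(c-1)(x+k) M_n(x) = c(n+2k) M_{n+1}(x) - (c+1)(n+k) M_n(x) + n M_{n-1}(x),
\]
and read off that its left-hand side, $(c-1)(x+k) M_n(x)$, equals precisely $\sqrt{c}$ times the expression computed in the first step. Dividing by $\sqrt{c}$ then produces the eigenvalue $\tfrac{c-1}{\sqrt{c}}(x+k)$, which is the assertion of the lemma.

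The only point requiring a little care is conceptual rather than computational: the three-term recurrence is naturally a recurrence in the degree $n$ with $x$ held fixed, whereas here it must be read as an eigenvalue equation for the nearest-neighbor difference operator $\pi_k(X_{a(c)})$ acting in the $n$-variable, the eigenvalue being the $x$-dependent coefficient $(c-1)(x+k)$. This reinterpretation is legitimate because the recurrence holds as an identity in both $n$ and $x$; equivalently one may appeal to the self-duality $M_n(x;2k,c) = M_x(n;2k,c)$ to pass between the recurrence in the degree and the difference equation in the variable, though this detour is not needed. I do not expect any genuine obstacle: once the coefficients and the overall factor $\sqrt{c}$ are matched, the proof is immediate.
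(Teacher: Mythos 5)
Your proof is correct and follows exactly the paper's approach: the paper's proof is the one-line remark that the lemma ``follows from the three-term recurrence relation for the Meixner polynomials,'' and your argument simply spells out that computation, matching the coefficients of $\pi_k(X_{a(c)})$ acting in $n$ against the recurrence with $\be=2k$ and dividing by $\sqrt{c}$. Your closing remark is also apt: no appeal to self-duality is needed here, since the recurrence in the degree $n$ is precisely the eigenvalue equation for the tridiagonal operator in $n$.
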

\begin{proof}
This follows from the three-term recurrence relation for the Meixner polynomials.
\end{proof}

Using the difference equation for the Meixner polynomials, we can realize $H$ as a difference operator acting on $M(n,x)$ in the $x$-variable.
\begin{lemma} \label{lem:action of H}
The following identity holds:
\[
\left[ \pi_k(H) M(\cdot,x)\right](n) = -\frac{2c}{1-c} (2k+x) M(n,x+1)  + \frac{1+c}{1-c} 2(k+x)  M(n,x) -\frac{2x}{1-c}   M(n,x-1).
\]
\end{lemma}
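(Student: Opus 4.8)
The plan is to evaluate the left-hand side directly and then recognize the right-hand side as the self-dual transpose of the Meixner three-term recurrence. Since $H$ acts diagonally in $\pi_k$ by \eqref{eq:actions HEF}, the left-hand side is immediately
\[
[\pi_k(H)M(\cdot,x)](n) = 2(k+n)\,M(n,x),
\]
so the entire content of the lemma is that this multiplication by $2(k+n)$ in the degree variable can be re-expressed as a three-term difference operator in the argument variable $x$.

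The key step is to invoke the self-duality $M_n(x;2k,c)=M_x(n;2k,c)$ recorded above. The three-term recurrence quoted in the text is a relation in the degree index at fixed argument; interchanging the roles of $n$ and $x$ via self-duality converts it into the difference equation
\[
(c-1)(n+k)\,M(n,x) = c(x+2k)\,M(n,x+1) - (c+1)(x+k)\,M(n,x) + x\,M(n,x-1),
\]
where I have already specialized $\beta=2k$, so that $\tfrac12\beta = k$. Solving this for $(n+k)M(n,x)$ and multiplying by $2$ produces exactly the three terms $M(n,x+1)$, $M(n,x)$, and $M(n,x-1)$ appearing on the right-hand side.

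What remains is pure bookkeeping: dividing by $c-1 = -(1-c)$ flips every sign, and one checks that the three resulting coefficients
\[
-\frac{2c}{1-c}(2k+x), \qquad \frac{2(c+1)}{1-c}(x+k), \qquad -\frac{2x}{1-c},
\]
coincide with those in the statement, the only mild care point being the identification $\frac{2(c+1)}{1-c}(x+k) = \frac{1+c}{1-c}\,2(k+x)$ for the diagonal term. I do not expect a genuine obstacle here: the representation makes the left-hand side trivial, and once self-duality has been used to transpose the recurrence into a difference equation in $x$, the identity follows by matching coefficients.
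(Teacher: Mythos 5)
Your proposal is correct and follows essentially the same route as the paper: the paper's proof also combines the diagonal action $[\pi_k(H)M(\cdot,x)](n)=2(k+n)M(n,x)$ with the difference equation in $x$ obtained from the three-term recurrence via self-duality, then matches coefficients. The only difference is that you spell out the transposition of the recurrence and the sign bookkeeping, which the paper leaves implicit.
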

\begin{proof}
We use the difference equation for $M(n,x)$, which is, by self-duality, equivalent to the three-term recurrence relation:
\[
(c-1) (n+k) \, M(n,x) = c (2k+x) M(n,x+1)  - (1+c)(k+x)  M(n,x) + x M(n,x-1).
\]
Since $[\pi_k(H)M(\cdot,x)](n) = 2(k+n) M(n,x)$ the result follows.
\end{proof}

With the actions of $X_{a(c)}$ and $H$ on Meixner polynomials, it is possible to express $E$ and $F$ acting on $M(n,x)$ as three-term difference operators in the variable $x$. This leads to a representation by difference operators in $x$, in which the basis elements $H$, $E$ and $F$ all act by three-term difference operators. Having actions of $H$, $E$ and $F$, we can express, after a large computation, $\De(\Om)$ in terms of difference operators in two variables $x_1$ and $x_2$. We prefer, however, to work with a simpler representation in which $H$ acts as a multiplication operator, and $E$ and $F$ as one-term difference operators.
Note that the action of $X_{a(c)}$ in the $x$-variable corresponds up to a constant to the action of $H$ in the $n$-variable, i.e.~it is a multiplication operator. We can make a new $\mathfrak{sl}(2,\C)$-triple with $X_{a(c)}$ playing the role of $H$, see \cite[\S3.2]{GK} The following results give the corresponding isomorphism.
\begin{lemma}\label{lem:isomorphism phi}
Define elements $H_{\sqrt{c}}, E_{\sqrt{c}}, F_{\sqrt{c}} \in \su(1,1)$ by
\[
\begin{split}
H_{\sqrt{c}} &= \frac{1+c}{1-c}H  - \frac{2\sqrt{c}}{1-c}E  + \frac{2\sqrt{c}}{1-c} F, \\
E_{\sqrt{c}} & = -\frac{\sqrt{c}}{1-c} H + \frac{1}{1-c} E - \frac{c}{1-c}F,\\
F_{\sqrt{c}} & = \frac{\sqrt{c}}{1-c} H - \frac{c}{1-c} E + \frac{1}{1-c} F,
\end{split}
\]
then the assignments
\[
\te_{\sqrt{c}}(H) = H_{\sqrt{c}}, \quad \te_{\sqrt{c}}(E)=E_{\sqrt{c}}, \quad \te_{\sqrt{c}}(F) = F_{\sqrt{c}},
\]
extend to a Lie algebra isomorphism $\te_{\sqrt{c}}: \su(1,1) \to \su(1,1)$ with inverse $(\theta_{\sqrt{c}})^{-1} = \theta_{-\sqrt{c}}$. Furthermore, $\theta_{\sqrt{c}}(\Om) = \Om$.
\end{lemma}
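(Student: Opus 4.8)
The statement has three parts: (i) the assignments $\te_{\sqrt{c}}(H)=H_{\sqrt{c}}$, etc., extend to a Lie algebra homomorphism; (ii) this homomorphism is an isomorphism with the stated inverse $\te_{-\sqrt{c}}$; and (iii) $\te_{\sqrt{c}}(\Om)=\Om$. For part (i), since $\su(1,1)$ is $\mathfrak{sl}(2,\C)$ as a Lie algebra and is generated by $H,E,F$, it suffices to check that the images $H_{\sqrt{c}}, E_{\sqrt{c}}, F_{\sqrt{c}}$ satisfy the same defining relations $[H,E]=2E$, $[H,F]=-2F$, $[E,F]=H$; then by the universal property $\te_{\sqrt{c}}$ extends uniquely to a homomorphism. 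This is entirely analogous to the proofs of Lemma \ref{lem:te h-> h} and the later $\mathfrak h$-isomorphism, where one ``just checks commutation relations.''

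**Key steps in order.** First I would substitute the explicit formulas for $H_{\sqrt{c}}, E_{\sqrt{c}}, F_{\sqrt{c}}$ into the three bracket relations and expand using bilinearity of the bracket together with the original relations $[H,E]=2E$, $[H,F]=-2F$, $[E,F]=H$. For instance, computing $[E_{\sqrt{c}},F_{\sqrt{c}}]$ produces a linear combination of $[H,E]$, $[H,F]$, $[E,F]$ (the diagonal brackets $[H,H]=[E,E]=[F,F]=0$ drop out), and after collecting the coefficients, with the common denominator $(1-c)^2$, one should recover exactly $H_{\sqrt{c}}$. I would verify the two mixed brackets $[H_{\sqrt{c}},E_{\sqrt{c}}]=2E_{\sqrt{c}}$ and $[H_{\sqrt{c}},F_{\sqrt{c}}]=-2F_{\sqrt{c}}$ the same way. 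Once the homomorphism is established, for part (ii) I would either compute the composition $\te_{-\sqrt{c}}\circ\te_{\sqrt{c}}$ on the generators $H,E,F$ and check it equals the identity (which simultaneously shows both $\te_{\sqrt{c}}$ and $\te_{-\sqrt{c}}$ are homomorphisms and mutually inverse, hence bijective), or observe that the $3\times 3$ coefficient matrix of $\te_{\sqrt{c}}$ in the basis $(H,E,F)$ is invertible with inverse obtained by $\sqrt{c}\mapsto-\sqrt{c}$.

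**The Casimir invariance.** For part (iii), rather than expanding $\te_{\sqrt{c}}(\Om)=\frac12 H_{\sqrt{c}}^2 + E_{\sqrt{c}}F_{\sqrt{c}} + F_{\sqrt{c}}E_{\sqrt{c}}$ directly in $U(\su(1,1))$, which is cumbersome, the clean argument is structural: $\te_{\sqrt{c}}$ is a Lie algebra isomorphism carrying one $\mathfrak{sl}(2,\C)$-triple $(H,E,F)$ to another triple $(H_{\sqrt{c}},E_{\sqrt{c}},F_{\sqrt{c}})$ satisfying identical relations, and the Casimir element \eqref{Casimir} is built from a triple by the same universal formula $\tfrac12 H^2 + EF + FE$. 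Since the Casimir is central and its definition depends only on the abstract bracket relations of the triple, the element $\tfrac12 H_{\sqrt{c}}^2 + E_{\sqrt{c}}F_{\sqrt{c}} + F_{\sqrt{c}}E_{\sqrt{c}}$ is the Casimir of the image triple and must equal $\Om$ itself (the enveloping algebra of $\mathfrak{sl}(2,\C)$ has a one-dimensional center in each degree spanned by powers of $\Om$, so any isomorphism-invariant central quadratic element is forced to be $\Om$). Alternatively, and more elementarily, one verifies $\te_{\sqrt{c}}(\Om)=\Om$ by the direct substitution, which is a finite (if tedious) computation.

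**Main obstacle.** There is no conceptual difficulty here; the whole lemma is a bookkeeping exercise. The only real risk is arithmetic: the coefficients involve $\sqrt{c}$ and $c$ over the common denominator $(1-c)$ (squared in $\Om$), so the expansions of $[E_{\sqrt{c}},F_{\sqrt{c}}]$ and especially of $\te_{\sqrt{c}}(\Om)$ generate many cross-terms that must cancel precisely. I expect the Casimir computation in part (iii) to be the most error-prone step if done by brute force; the structural argument above sidesteps this and is the route I would actually take, relegating the explicit verification to a remark that the relations ``are checked by direct calculation,'' mirroring the style of the earlier lemmas in the paper.
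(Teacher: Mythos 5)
Your proposal is correct and takes essentially the same route as the paper, whose entire proof is the one-line remark that one checks the commutation relations by a straightforward computation; your expansion of the brackets $[H_{\sqrt{c}},E_{\sqrt{c}}]$, $[H_{\sqrt{c}},F_{\sqrt{c}}]$, $[E_{\sqrt{c}},F_{\sqrt{c}}]$, the verification that $\te_{-\sqrt{c}}$ inverts $\te_{\sqrt{c}}$ on generators, and the check of $\te_{\sqrt{c}}(\Om)=\Om$ simply fill in exactly those computations. One caution: your structural argument for the Casimir is imprecise as stated (the centre of $U(\mathfrak{sl}(2,\C))$ in filtration degree two is spanned by $1$ \emph{and} $\Om$, so pinning down $\te_{\sqrt{c}}(\Om)=a\Om+b$ to $a=1$, $b=0$ requires, e.g., the Killing-form characterization of $\Om$ or the fact that every automorphism of $\mathfrak{sl}(2,\C)$ is inner), but since you offer the direct substitution as a fallback, the argument stands.
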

\begin{proof}
We need to check the commutation relations, which is a straightforward computation.
\end{proof}
Note that $\te_{\sqrt{c}}$ preserves the $*$-structure, i.e.~$\te_{\sqrt{c}}(X^*) = \te_{\sqrt{c}}(X)^*$. \\

Observe that $H_{\sqrt{c}} = \frac{2\sqrt{c}}{c-1}X_{a(c)}$, so we defined $H_{\sqrt{c}}$ in such a way that
\[
[\pi_k(H_{\sqrt{c}}) M(\cdot,x)](n) = 2(k+x) M(n,x).
\]
By Lemma \ref{lem:isomorphism phi},
\[
H = \theta_{\sqrt{c}}^{-1}(H_{\sqrt{c}}) = \theta_{-\sqrt{c}}(H_{\sqrt{c}}) = \frac{1+c}{1-c}H_{\sqrt{c}}  + \frac{2\sqrt{c}}{1-c}E_{\sqrt{c}}  - \frac{2\sqrt{c}}{1-c} F_{\sqrt{c}}.
\]
This gives
\begin{equation} \label{eq:EFc}
\begin{gathered}
E_{\sqrt{c}}-F_{\sqrt{c}} = \frac{1-c}{2\sqrt{c}} H - \frac{ 1+c}{2\sqrt{c}}H_{\sqrt{c}},\\
E_{\sqrt{c}}+F_{\sqrt{c}}  = \frac{1-c}{4\sqrt{c}} [H,H_{\sqrt{c}}],
\end{gathered}
\end{equation}
which shows that we can express $E_{\sqrt{c}}$ and $F_{\sqrt{c}}$ completely in terms of $H_{\sqrt{c}}$ and $H$. This allows us to write down explicit actions of $E_{\sqrt{c}}$ and $F_{\sqrt{c}}$ acting on $M(n,x)$ in the $x$-variable. This then shows that $M(n,x)$ has the desired intertwining properties.
\begin{lemma} \label{eq:actions on M(n,x)}
The functions $M(n,x)$ satisfy
\[
\begin{split}
[\pi_k(H_{\sqrt{c}})M(\cdot,x)](n) & = 2(k+x) M(n,x),\\
[\pi_k(E_{\sqrt{c}})M(\cdot,x)](n) & = -\frac{x}{\sqrt{c}} M(n,x-1),\\
[\pi_k(F_{\sqrt{c}})M(\cdot,x)](n) & = \sqrt{c}(2k+x) M(n,x+1).
\end{split}
\]
\end{lemma}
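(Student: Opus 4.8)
The plan is to use that $E_{\sqrt c}$ and $F_{\sqrt c}$ are expressed through $H$ and $H_{\sqrt c}$ by \eqref{eq:EFc}, together with the fact that the actions of $\pi_k(H)$ and $\pi_k(H_{\sqrt c})$ on $M(\cdot,x)$, read as operators in the $x$-variable, are already available. Indeed, the first identity of the lemma is immediate: since $H_{\sqrt c}=\frac{2\sqrt c}{c-1}X_{a(c)}$ and $M(\cdot,x)$ is an eigenfunction of $\pi_k(X_{a(c)})$ with eigenvalue $\frac{c-1}{\sqrt c}(x+k)$, the operator $\pi_k(H_{\sqrt c})$ acts on $M(\cdot,x)$ as multiplication by $2(k+x)$; call this operator $\mathcal A$. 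By Lemma \ref{lem:action of H}, $\pi_k(H)$ acts on $M(\cdot,x)$ as the three-term difference operator $\mathcal B$ in the $x$-variable.

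First I would record the elementary but essential bookkeeping rule for composing these actions. To evaluate $[\pi_k(X)\pi_k(X')M(\cdot,x)](n)$ one applies $\pi_k(X')$ to $M(\cdot,x)$, rewrites the outcome as a finite linear combination of the functions $M(\cdot,x')$ using the known $x$-action of $X'$, and then applies $\pi_k(X)$ term by term. Because $\pi_k$ is a representation in the $n$-variable while the shifts occur in the $x$-variable, this \emph{reverses} the order of the corresponding $x$-operators. Concretely, $\pi_k(HH_{\sqrt c})$ and $\pi_k(H_{\sqrt c}H)$ act on $M(\cdot,x)$ as $\mathcal A\mathcal B$ and $\mathcal B\mathcal A$, respectively, so that $\pi_k([H,H_{\sqrt c}])$ acts as the commutator $[\mathcal A,\mathcal B]$. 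A direct computation, in which the diagonal terms cancel, yields
\[
[\mathcal A,\mathcal B]\, g(x) = \frac{4c}{1-c}(2k+x)\, g(x+1) - \frac{4x}{1-c}\, g(x-1).
\]

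With this in hand the two remaining identities follow by substituting into \eqref{eq:EFc}. The first relation shows that $\pi_k(E_{\sqrt c}-F_{\sqrt c})$ acts as $\frac{1-c}{2\sqrt c}\mathcal B-\frac{1+c}{2\sqrt c}\mathcal A$; here too the diagonal terms cancel, leaving the two-term operator $g(x)\mapsto -\sqrt c(2k+x)g(x+1)-\frac{x}{\sqrt c}g(x-1)$. The second relation shows that $\pi_k(E_{\sqrt c}+F_{\sqrt c})$ acts as $\frac{1-c}{4\sqrt c}[\mathcal A,\mathcal B]=\sqrt c(2k+x)g(x+1)-\frac{x}{\sqrt c}g(x-1)$. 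Taking half the sum and half the difference isolates the one-term operators, giving $\pi_k(E_{\sqrt c})$ acting as $g(x)\mapsto-\frac{x}{\sqrt c}g(x-1)$ and $\pi_k(F_{\sqrt c})$ acting as $g(x)\mapsto\sqrt c(2k+x)g(x+1)$, which are precisely the claimed formulas. The main obstacle is the reversal rule of the second paragraph: one must keep the ordering straight so that $\pi_k([H,H_{\sqrt c}])$ is matched with $[\mathcal A,\mathcal B]$ and not its negative, since a sign slip there would propagate into both final identities through the sum $E_{\sqrt c}+F_{\sqrt c}$.
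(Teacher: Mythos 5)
Your proof is correct and follows essentially the same route as the paper's: both use the identities \eqref{eq:EFc} together with the known actions of $H_{\sqrt{c}}$ (eigenvalue $2(k+x)$) and of $H$ (Lemma \ref{lem:action of H}) to compute $\pi_k(E_{\sqrt{c}}\pm F_{\sqrt{c}})$ on $M(\cdot,x)$ and then separate the two one-term operators. Your explicit treatment of the order-reversal rule, ensuring $\pi_k([H,H_{\sqrt{c}}])$ corresponds to $[\mathcal A,\mathcal B]$ rather than its negative, is a worthwhile clarification of a step the paper leaves implicit, and your resulting formulas agree with the paper's.
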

\begin{proof}
We already know the action of $H_{\sqrt{c}}$. Using \eqref{eq:EFc} and Lemma \ref{lem:action of H} for the action of $H$ we find
\begin{gather*}
\left[\pi_k(E_{\sqrt{c}}-F_{\sqrt{c}}) M(\cdot,x)\right](n) = -\sqrt{c}(2k+x) M(n,x+1)  - \frac{x}{\sqrt{c}} M(n,x-1),\\
\left[\pi_k(E_{\sqrt{c}}+F_{\sqrt{c}}) M(\cdot,x)\right](n) =  \sqrt{c}(2k+x) M(n,x+1) - \frac{x}{\sqrt{c}} M(n,x-1),
\end{gather*}
which gives the actions of $E_{\sqrt{c}}$ and $F_{\sqrt{c}}$.
\end{proof}
Now we are ready to define the intertwiner.
\begin{prop} \label{prop:intertwiner}
The operator $\La:F_0(\N) \to F(\N)$ defined by
\[
(\La f)(x) = \sum_{n \in \N} w_k(n) f(n) M(n,x)
\]
extends to a unitary operator $\La:\mathcal H_k \to \mathcal H_k$, and intertwines $\pi_k$ with $\pi_k\circ \te_{-\sqrt{c}}$. Furthermore, the kernel $M(n,x)$ satisfies
\[
[\pi_k(X^*) M(\cdot,x)](n) = [\pi_k(\te_{-\sqrt{c}}(X))M(n,\cdot)](x), \qquad X \in U(\su(1,1)).
\]
\end{prop}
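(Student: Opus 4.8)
The plan is to mirror the proof of Proposition \ref{prop:intertwiner Charlier}, treating the unitarity of $\La$ and the kernel intertwining identity as two separate tasks and then invoking Lemma \ref{lem:unitary equivalence}.

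For unitarity I would use the cheap duality functions $\de_m(n) = \de_{m,n}/w_k(n)$, which form an orthogonal basis of $\mathcal H_k$ with $\|\de_m\|^2 = 1/w_k(m)$. By construction $\La \de_m = M(m,\cdot)$, so it suffices to check that the Meixner functions $M(m,\cdot)$ form an orthogonal basis of $\mathcal H_k$ with the same squared norms. This is exactly the content of the Meixner orthogonality relations (with $\be = 2k$), the normalizations of $w_k$ and of $M(n,x) = M_n(x;2k,c)$ being arranged so that $\|M(m,\cdot)\|^2 = \|\de_m\|^2$. Since $\La$ then maps an orthogonal basis to an orthogonal basis preserving norms, it extends to a unitary operator $\mathcal H_k \to \mathcal H_k$.

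For the kernel identity, by (conjugate-)linearity it is enough to verify $[\pi_k(X^*) M(\cdot,x)](n) = [\pi_k(\te_{-\sqrt c}(X)) M(n,\cdot)](x)$ for $X$ running through the generators $H, E, F$; the extension to all of $U(\su(1,1))$ then follows from the multiplicative argument in the proof of Theorem \ref{thm:duality}, using that $(XY)^* = Y^* X^*$ and that $\te_{-\sqrt c}$ is an algebra homomorphism. The decisive input is Lemma \ref{eq:actions on M(n,x)}, which records the actions of $\te_{\sqrt c}(H), \te_{\sqrt c}(E), \te_{\sqrt c}(F)$ on $M(\cdot,x)$ in the $n$-variable as multiplication and one-step shift operators in the $x$-variable. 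Feeding the identities $\te_{-\sqrt c} = \te_{\sqrt c}^{-1}$ and $\te_{\sqrt c}(X^*) = \te_{\sqrt c}(X)^*$ (that is, $\te_{\sqrt c}$ preserves the $\ast$-structure) into these relations lets me move every occurrence of $\pi_k$ from the $n$-variable to the $x$-variable and identify the result with $\pi_k(\te_{-\sqrt c}(X))$ acting in $x$. Once property 1 of Lemma \ref{lem:unitary equivalence} is verified in this form, the lemma gives that $\La$ intertwines $\pi_k$ with $\pi_k \circ \te_{-\sqrt c}$.

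I expect the main obstacle to lie entirely in this second step, specifically in the sign bookkeeping forced by the $\ast$-structure $E^* = -F$, $F^* = -E$. While $H_{\sqrt c} = \te_{\sqrt c}(H)$ acts by a harmless multiplication, the raising and lowering generators must first be reexpressed as difference operators in $x$ by means of the Meixner difference equation (Lemma \ref{lem:action of H}) and the self-duality $M(n,x) = M(x,n)$, and the signs produced there have to cancel precisely against the signs coming from $X \mapsto X^*$ and from $\te_{\mp\sqrt c}$. Getting these signs to match on both sides of the intertwining identity is the delicate computation on which the whole argument turns.
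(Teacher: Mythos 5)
Your proposal is correct and is essentially the paper's own proof: the paper likewise deduces unitarity from the orthogonality relations and completeness of the Meixner polynomials (your cheap-basis argument is exactly the proof of Proposition \ref{prop:intertwiner Charlier} transplanted, which is what the paper intends), and it likewise reduces the kernel identity to the generator actions of Lemma \ref{eq:actions on M(n,x)}, with the multiplicative extension to $U(\su(1,1))$ via $(XY)^*=Y^*X^*$ left implicit. One caveat, which you share with the paper: since $w_k$ carries the factor $(1-c)^{2k}$ while $M(n,x)=M_n(x;2k,c)$ has no compensating normalization, the orthogonality relations actually give $\|M(m,\cdot)\|^2=(1-c)^{2k}\,\|\de_m\|^2$ rather than equality, so $\La$ as defined is $(1-c)^{k}$ times a unitary operator — a harmless constant for all the duality theorems, but your assertion that the norms match exactly is not literally true with these normalizations.
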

\begin{proof}
Unitarity follows from the orthogonality relations and completeness of the Meixner polynomials. The properties of the kernel follow from Lemma \ref{eq:actions on M(n,x)}.
\end{proof}

For $j=1,\ldots,N$ assume $k_j>0$ . From Proposition \ref{prop:intertwiner} we find unitary equivalence for tensor product representations,
\[
\pi_{k_1} \tensor \cdots \tensor \pi_{k_N}  \simeq (\pi_{k_1} \tensor \cdots \tensor \pi_{k_N}) \circ (\te_{-\sqrt{c}} \tensor \cdots \tensor \te_{-\sqrt{c}}).
\]
Using $\te_{-\sqrt{c}}\tensor \te_{-\sqrt{c}} \circ \De = \De \circ \te_{-\sqrt{c}}$ and $\te_{-\sqrt{c}}(\Om) = \Om$, we see that, as in Lemma \ref{lem:LSIP su11}, the right-hand-side applied to $\sum Y_{i,j}-2k_ik_j$ is the generator $L^{\mathrm{SIP}}$. Applying Theorem \ref{thm:duality} then leads to self-duality of $L^{\mathrm{SIP}}$ \eqref{eq:L SIP}, i.e.~self-duality of the symmetric inclusion process SIP($k$).
\begin{theorem} \label{thm:selfduality SIP}
The operator $L^{\mathrm{SIP}}$ defined by \eqref{eq:L SIP} is self-dual, with duality function
\[
\prod_{j=1}^N M(n_j,x_j;k_j,c).
\]
\end{theorem}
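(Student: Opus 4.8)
The plan is to apply Theorem \ref{thm:duality} to the pair of tensor product representations $\rho = \pi_{k_1}\tensor\cdots\tensor\pi_{k_N}$ and $\si = \rho\circ(\te_{-\sqrt{c}}\tensor\cdots\tensor\te_{-\sqrt{c}})$ of $\su(1,1)$ on $L^2(\N^N,w)$, with the self-adjoint element $Y_L = \sum_{1\le i<j\le N}(Y_{i,j}+2k_ik_j)\in U(\su(1,1))^{\tensor N}$ playing the role of the dynamics-generating element. That $Y_L$ is self-adjoint follows from $\Om^*=\Om$, the fact that $\De$ is a $*$-homomorphism, and the reality of the scalars $2k_ik_j$, so that $L_1=\rho(Y_L)$ and $L_2=\si(Y_L)$ are indeed self-adjoint operators as required by the theorem.

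First I would assemble the intertwiner. For each $j$, Proposition \ref{prop:intertwiner} supplies a unitary operator $\La_j:\mathcal H_{k_j}\to\mathcal H_{k_j}$ intertwining $\pi_{k_j}$ with $\pi_{k_j}\circ\te_{-\sqrt{c}}$, whose integral kernel is the Meixner polynomial $M(n_j,x_j;k_j,c)$, and this kernel satisfies the intertwining identity \eqref{eq:rho d=si d} with $\si_j=\pi_{k_j}\circ\te_{-\sqrt{c}}$. Tensoring the $\La_j$ yields a unitary intertwiner between $\rho$ and $\si$, so the hypotheses on the kernels $K_j=M(\cdot,\cdot;k_j,c)$ in Theorem \ref{thm:duality} are in place.

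The crux is to check that $\rho(Y_L)=\si(Y_L)=L^{\mathrm{SIP}}$. The identity $\rho(Y_L)=L^{\mathrm{SIP}}$ is exactly Lemma \ref{lem:LSIP su11}. For the second equality I would show that $Y$ is fixed by $\te_{-\sqrt{c}}\tensor\te_{-\sqrt{c}}$. Because $\De(X)=1\tensor X+X\tensor 1$ is primitive and $\te_{-\sqrt{c}}$ is a unital Lie algebra isomorphism, one has $(\te_{-\sqrt{c}}\tensor\te_{-\sqrt{c}})\circ\De=\De\circ\te_{-\sqrt{c}}$ automatically; together with $\te_{-\sqrt{c}}(\Om)=\Om$ from Lemma \ref{lem:isomorphism phi}, the definition \eqref{eq:Y} gives $(\te_{-\sqrt{c}}\tensor\te_{-\sqrt{c}})(Y)=Y$ at the algebra level. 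Since $Y_{i,j}$ acts nontrivially only in the $i$-th and $j$-th legs and $\te_{-\sqrt{c}}$ fixes the scalar $2k_ik_j$, it follows that $(\te_{-\sqrt{c}}^{\tensor N})(Y_L)=Y_L$, and hence $\si(Y_L)=\rho\big((\te_{-\sqrt{c}}^{\tensor N})(Y_L)\big)=\rho(Y_L)=L^{\mathrm{SIP}}$.

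With $L_1=L_2=L^{\mathrm{SIP}}$, Theorem \ref{thm:duality} then delivers self-duality with duality function $\prod_{j=1}^N K_j(n_j,x_j)=\prod_{j=1}^N M(n_j,x_j;k_j,c)$. I expect the one point deserving care to be the invariance $(\te_{-\sqrt{c}}\tensor\te_{-\sqrt{c}})(Y)=Y$, which is the $\su(1,1)$-analogue of Lemma \ref{lem:Y->Y+R} in the Heisenberg setting. There a nonzero remainder $R$ appeared and had to be annihilated by the representation $\rho_c\tensor\rho_c$; here, by contrast, the equality $\te_{-\sqrt{c}}(\Om)=\Om$ forces the remainder to vanish already in the enveloping algebra, so no representation-dependent cancellation is needed. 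The remaining ingredients — self-adjointness of $Y_L$ and the unbounded-operator domain questions on the dense domain $F_0(\N)$ — are routine given the $*$-structure.
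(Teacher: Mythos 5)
Your proposal is correct and follows essentially the same route as the paper: tensoring the unitary intertwiner of Proposition \ref{prop:intertwiner}, observing that $(\te_{-\sqrt{c}}\tensor\te_{-\sqrt{c}})\circ\De=\De\circ\te_{-\sqrt{c}}$ together with $\te_{-\sqrt{c}}(\Om)=\Om$ forces $(\te_{-\sqrt{c}}\tensor\te_{-\sqrt{c}})(Y)=Y$ so that $\si(Y_{i,j})=\rho(Y_{i,j})$ and both realizations of $Y_L$ equal $L^{\mathrm{SIP}}$ by Lemma \ref{lem:LSIP su11}, and then applying Theorem \ref{thm:duality}. Your added remarks (self-adjointness of $Y_L$, and the contrast with Lemma \ref{lem:Y->Y+R} where the cancellation only holds in the representation rather than in $U(\mathfrak g)^{\tensor 2}$) are accurate but only make explicit what the paper leaves implicit.
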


\begin{remark}
The Lie algebra $\su(2)$ is $\mathfrak{sl}(2,\C)$ equipped with the $*$-structure defined by $H^*=H, E^*=F$.
It is well known that $\su(2)$ has only finite dimensional irreducible $*$-representations. These can formally be obtained from the $\su(1,1)$ discrete series representation \eqref{eq:actions HEF} by setting $k=-j/2$ for some $j \in \N$, where $j+1$ is the dimension of the corresponding representation space. If we make the corresponding substitution $k_i=-j_i/2$ in the generator \eqref{eq:L SIP} of the symmetric inclusion process, we obtain the generator of the symmetric exclusion process SEP on $N$ sites where site $i$ can have at most $j_i$ particles. Making a similar substitution in Theorem \ref{thm:selfduality SIP} we find self-duality of SEP, with duality function given by a product of Krawchouck polynomials.
\end{remark}

\subsection{Laguerre polynomials and duality between SIP and BEP}
The Laguerre polynomials are defined by
\[
L_n^{(\al)}(x) = \frac{(\al+1)_n }{n!} \rFs{1}{1}{-n}{\al+1}{x}.
\]
They form an orthogonal basis for $L^2([0,\infty), x^\al e^{-x}dx)$, with orthogonality relations given by
\[
\int_0^\infty L_m^{(\al)}(x) L_n^{(\al)}(x) x^\al e^{-x} dx = \de_{mn} \frac{ \Ga(\al+n+1)}{n!}, \qquad \al>-1.
\]
The three-term recurrence relation is
\[
-x L_n^{(\al)}(x) = (n+1) L_{n+1}^{(\al)}(x) - (2n+\al+1) L_n^{(\al)}(x) + (n+\al) L_{n-1}^{(\al)}(x),
\]
and the differential equation is
\[
x \frac{d^2}{dx^2} L_n^{(\al)}(x) + (\al+1-x) \frac{d}{dx} L_n^{(\al)}(x) = -n L_n^{(\al)}(x).
\]
\*

We consider the action of the parabolic Lie algebra element $X_1  = -H+E-F$,
\[
[\pi_k(X_1) f](n) = -2(n+k) f(n) + \frac{n}{\sqrt{c}} f(n-1)+ \sqrt{c}(2k+n) f(n+1).
\]
Using the three-term recurrence relation for the Laguerre polynomials, we find the following result.
\begin{lemma} \label{lem:eigenfunctions X1}
The Laguerre polynomials $L(n,x;k)=\frac{n! c^{-\frac{n}{2}}}{(2k)_n}L_n^{(2k-1)}(x)$ are eigenfunctions of $\pi_k(X_1)$,
\[
[\pi_k(X_1) L(\cdot,x)](n) = -x L(n,x), \qquad x \in [0,\infty).
\]
\end{lemma}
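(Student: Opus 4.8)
The plan is to derive the eigenvalue equation directly from the three-term recurrence relation for the Laguerre polynomials, using that $\pi_k(X_1)$ acts as a three-term difference operator in $n$. Making the action explicit on the kernel gives
\[
[\pi_k(X_1) L(\cdot,x)](n) = \sqrt{c}(2k+n) L(n+1,x) - 2(n+k) L(n,x) + \frac{n}{\sqrt{c}} L(n-1,x),
\]
so the claim is equivalent to showing that this right-hand side equals $-x\, L(n,x)$. This is exactly the form of the recurrence once the normalization has been taken into account.

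First I would specialize the recurrence to $\al = 2k-1$,
\[
-x L_n^{(2k-1)}(x) = (n+1) L_{n+1}^{(2k-1)}(x) - (2n+2k) L_n^{(2k-1)}(x) + (n+2k-1) L_{n-1}^{(2k-1)}(x),
\]
and then substitute the inverse normalization $L_n^{(2k-1)}(x) = \frac{(2k)_n c^{n/2}}{n!} L(n,x)$. The main step is to track how the prefactor $\frac{(2k)_m c^{m/2}}{m!}$ behaves under the shifts $m \mapsto n \pm 1$. Using $(2k)_{n+1} = (2k)_n (2k+n)$ and cancelling the factor $n+1$ against $(n+1)!$, the Laguerre coefficient $n+1$ of $L_{n+1}^{(2k-1)}$ turns into $\sqrt{c}(2k+n)$ after dividing out the common factor $\frac{(2k)_n c^{n/2}}{n!}$; using $(2k)_{n-1} = (2k)_n/(2k+n-1)$ together with the identity $n+2k-1 = 2k+n-1$, the coefficient $n+2k-1$ of $L_{n-1}^{(2k-1)}$ collapses to $n/\sqrt{c}$; and the diagonal coefficient $2n+2k$ becomes $2(n+k)$.

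The key observation is that the normalization $\frac{n!\, c^{-n/2}}{(2k)_n}$ has been chosen precisely so that these three transformed coefficients coincide with $\sqrt{c}(2k+n)$, $2(n+k)$ and $n/\sqrt{c}$ from the action of $\pi_k(X_1)$. Dividing the specialized recurrence through by $\frac{(2k)_n c^{n/2}}{n!}$ then yields exactly $-x\, L(n,x) = [\pi_k(X_1) L(\cdot,x)](n)$, which is the assertion. There is no substantial obstacle here: the proof is a finite calculation, and the only care needed is the bookkeeping of the Pochhammer and factorial shifts and checking that the half-integer power of $c$ enters each of the three terms with the correct sign.
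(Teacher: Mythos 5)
Your proof is correct and follows exactly the paper's approach: the paper also derives this lemma directly from the three-term recurrence relation for the Laguerre polynomials specialized to $\al = 2k-1$, with the normalization $\frac{n!\,c^{-n/2}}{(2k)_n}$ converting the recurrence coefficients into those of $\pi_k(X_1)$. Your bookkeeping of the Pochhammer, factorial, and $c^{\pm 1/2}$ shifts is accurate, so nothing is missing.
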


Just as we did in the elliptic case, we can define an algebra isomorphism that will be useful. In this case, the element $X_1$ corresponds to the generator $E$.
\begin{lemma}
The assignments
\[
\begin{split}
\te(H)=E+F, \qquad \te(E)=\frac{i}{2}(-H+E-F), \qquad \te(F)=\frac{i}{2}(H+E-F),
\end{split}
\]
extend to a Lie algebra isomorphism $\te:\mathfrak{sl}(2,\C) \to \mathfrak{sl}(2,\C)$. Furthermore, $\te(\Om)=\Om$.
\end{lemma}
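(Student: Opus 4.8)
The plan is to treat $\te$ first as a homomorphism, checked on the presentation of $\mathfrak{sl}(2,\C)$, then to upgrade homomorphism to isomorphism using simplicity, and finally to deduce $\te(\Om)=\Om$ by a structural argument (with a hands-on substitution as fallback). Since $\mathfrak{sl}(2,\C)$ is generated by $H,E,F$ subject to precisely $[H,E]=2E$, $[H,F]=-2F$, $[E,F]=H$, it suffices to verify that the images satisfy the same three relations; the assignment then extends uniquely to a Lie algebra homomorphism. So first I would expand the three brackets bilinearly using the original relations and confirm $[\te(H),\te(E)]=2\te(E)$, $[\te(H),\te(F)]=-2\te(F)$, and $[\te(E),\te(F)]=\te(H)$. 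The last of these, for instance, is $[\te(E),\te(F)]=-\tfrac14[-H+E-F,\,H+E-F]$, and expanding the inner bracket collapses to $-4(E+F)$, giving $E+F=\te(H)$; the other two are equally short.

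For bijectivity I would avoid a determinant computation and instead invoke simplicity of $\mathfrak{sl}(2,\C)$: $\ker\te$ is an ideal, and since $\te(H)=E+F\neq0$ the map is nonzero, so $\ker\te=0$; an injective linear endomorphism of a finite-dimensional space is bijective, hence $\te$ is a Lie algebra isomorphism. (Equivalently one checks that the matrix of $\te$ in the basis $(H,E,F)$ has determinant $1$, or exhibits an explicit inverse.)

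For the statement $\te(\Om)=\Om$ I would argue structurally. The isomorphism $\te$ extends to an algebra automorphism of $U(\mathfrak{sl}(2,\C))$, and because the Dynkin type $A_1$ admits no nontrivial diagram symmetry, every automorphism of $\mathfrak{sl}(2,\C)$ is inner; an inner automorphism lifts to conjugation by a group element on $U(\mathfrak{sl}(2,\C))$, which fixes the center pointwise. As $\Om$ from \eqref{Casimir} is central, this forces $\te(\Om)=\Om$ exactly (no scalar ambiguity, since conjugation cannot rescale a central element). The one point needing justification here is that $\te$ is inner; if one prefers to remain elementary, one substitutes the images directly into \eqref{Casimir} and simplifies $\tfrac12\te(H)^2+\te(E)\te(F)+\te(F)\te(E)$.

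The main obstacle is precisely this final simplification when carried out by hand: it is a noncommutative degree-two expansion in which the spurious $E^2$ and $F^2$ terms, together with all mixed products, must cancel exactly, and it is the most error-prone step. The mechanism that makes it work is that the square $\te(H)^2=(E+F)^2$ produces $E^2$ and $F^2$ contributions that are cancelled by the $-\tfrac14$ factors arising from $\te(E)\te(F)+\te(F)\te(E)$, leaving exactly $\tfrac12H^2+EF+FE=\Om$. I would therefore present the structural argument as the clean route and keep the direct computation only as verification, since that computation is where the real risk of slips lies.
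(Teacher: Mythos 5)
Your proof is correct, and on two of its three steps it takes a genuinely different route from the paper. The paper offers no written proof of this lemma at all; following its neighbouring isomorphism lemmas (the Heisenberg $\te$ and the elliptic $\te_{\sqrt{c}}$, whose proofs read ``check the commutation relations, a straightforward computation''), its intended argument is pure computation throughout: verify the three brackets, write down the inverse map, and substitute the images into \eqref{Casimir} to get $\te(\Om)=\Om$. Your bracket verification coincides with that (and is right: the inner bracket $[-H+E-F,\,H+E-F]$ does collapse to $-4(E+F)$). Where you differ is in replacing the last two computational steps by structure: injectivity from simplicity of $\mathfrak{sl}(2,\C)$ (the kernel is a proper ideal, hence zero), and $\te(\Om)=\Om$ from the facts that type $A_1$ has no diagram automorphisms, so $\te=\mathrm{Ad}(g)$ for some $g$ in the connected group, and that $\mathrm{Ad}(g)$ then fixes the center of $U(\mathfrak{sl}(2,\C))$ pointwise because the orbit map $g\mapsto \mathrm{Ad}(g)\Om$ has vanishing differential $\mathrm{ad}(x)\Om=0$. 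Both steps are valid, and the structural route buys something real: it proves in one stroke that \emph{every} automorphism used in the paper ($\te$, $\te_{\sqrt{c}}$, $\te_\phi$) preserves $\Om$, with no risk of slips in a noncommutative expansion. Its cost is the two nontrivial inputs just named; in particular ``inner automorphisms fix the center pointwise'' genuinely needs the connectedness argument, since an automorphism a priori only maps $Z(U(\mathfrak g))$ into itself, so the argument is more explanatory rather than cheaper. Your fallback computation also closes correctly: $\te(E)\te(F)+\te(F)\te(E)=\tfrac12 H^2-\tfrac12(E^2+F^2)+\tfrac12(EF+FE)$, whose unwanted terms cancel exactly against $\tfrac12\te(H)^2=\tfrac12(E^2+EF+FE+F^2)$, leaving $\tfrac12 H^2+EF+FE=\Om$, which is precisely the cancellation mechanism you predicted.
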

Note that $\te(E)=\frac{i}{2}X_1$. Furthermore, $\te$ does not preserve the $\su(1,1)$-$*$-structure, i.e.~$\te(X^*) \neq \te(X)^*$ in general. However, we can define another $*$-structure on $\mathfrak{sl}(2,\C)$ by $\star = \te^{-1} \circ * \circ \te$, then
\begin{equation} \label{eq:star-structure}
H^\star = -H, \qquad E^\star = - E, \qquad F^\star = - F,
\end{equation}
which is the $*$-structure of $i\mathfrak{sl}(2,\R)$.
Next we determine the actions of the generators on the Laguerre polynomials.
\begin{lemma}\label{lem:actions on Laguerre}
The Laguerre polynomials $L(n,x)$ satisfy
\[
\begin{split}
[\pi_k(\te(H)) L(\cdot,x)](n) &= 2x \frac{\partial}{\partial x} L(n,x) + (2k-x)\, L(n,x),\\
[\pi_k(\te(E)) L(\cdot,x)](n) &= -\frac12 ix\, L(n,x),\\
[\pi_k(\te(F)) L(\cdot,x)](n) &= -2ix \frac{\partial^2}{\partial x^2} L(n,x) - 2i(2k-x) \frac{\partial}{\partial x} L(n,x) +\frac{i}{2}(4k-x) \,L(n,x).
\end{split}
\]
\end{lemma}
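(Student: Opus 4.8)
The plan is to reduce all three actions to the standard structure relations for Laguerre polynomials, exactly as the eigenfunction identity in Lemma~\ref{lem:eigenfunctions X1} was obtained from the three-term recurrence. The starting point is the observation that $\te(E) = \frac{i}{2}(-H+E-F) = \frac{i}{2}X_1$, so the middle identity is immediate: applying $\pi_k$ and using Lemma~\ref{lem:eigenfunctions X1} gives $[\pi_k(\te(E))L(\cdot,x)](n) = \frac{i}{2}[\pi_k(X_1)L(\cdot,x)](n) = -\frac{i}{2}x\,L(n,x)$, with no differentiation required.

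For the remaining two generators I would first convert the $n$-shift actions of $E$ and $F$ in \eqref{eq:actions HEF} into differential operators in $x$. Writing $\al = 2k-1$ and using the normalization $L(n,x) = \frac{n!\,c^{-n/2}}{(2k)_n}L_n^{(\al)}(x)$, I would feed the derivative relation $x\frac{d}{dx}L_n^{(\al)} = nL_n^{(\al)} - (n+\al)L_{n-1}^{(\al)}$ together with the three-term recurrence into the definitions of $\pi_k(E)$ and $\pi_k(F)$. After the normalization constants telescope (here the factor $(n+\al)=(n+2k-1)$ cancels the Pochhammer ratio, leaving precisely the $\frac{n}{\sqrt c}$ and $\sqrt c(2k+n)$ coefficients of \eqref{eq:actions HEF}), this yields the two first-order identities
\[
[\pi_k(E)L(\cdot,x)](n) = n\,L(n,x) - x\tfrac{\partial}{\partial x}L(n,x), \qquad [\pi_k(F)L(\cdot,x)](n) = -x\tfrac{\partial}{\partial x}L(n,x) - (n+2k-x)L(n,x).
\]

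With these in hand, $\te(H) = E+F$ follows by simple addition: the explicit $n$-terms cancel and one is left with a first-order operator in $x$. The element $\te(F) = \frac{i}{2}(H+E-F)$ is the delicate case: summing the three contributions leaves a surviving multiple of $n\,L(n,x)$ (the $n$-terms no longer cancel), so a second-order term must be produced. The key step here is to invoke the Laguerre differential equation, which in the present normalization reads $n\,L(n,x) = -x\frac{\partial^2}{\partial x^2}L(n,x) - (2k-x)\frac{\partial}{\partial x}L(n,x)$; substituting this replaces the leftover $n\,L(n,x)$ by a second-order differential operator and produces the stated expression for $\te(F)$.

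I expect the $\te(F)$ computation to be the main obstacle, both because it is the only one requiring the second-order differential equation and because it demands the most careful bookkeeping of the $c$-dependent normalization and of the signs coming from $E-F$ versus $E+F$. A consistency check is that the three resulting operators in $x$ must reproduce the $\mathfrak{sl}(2,\C)$ commutation relations of the image triple $\te(H),\te(E),\te(F)$, keeping in mind that passing the action through the kernel $L(n,x)$ reverses the order of composition; this reversal is exactly the point where a sign must be tracked with care, and it pins down the relative signs of the three actions unambiguously.
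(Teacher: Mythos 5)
Your route genuinely differs from the paper's. The paper never computes the actions of $E$ and $F$ separately: it realizes $\pi_k(H)$ as a second-order differential operator via the Laguerre differential equation, then obtains $\te(H)$ from the commutator identity $\te(H)=E+F=-\tfrac12[X_1,H]$ and $\te(F)$ from the linear identity $\te(F)=\te(E)+iH$. You instead extract first-order $x$-realizations of $\pi_k(E)$ and $\pi_k(F)$ from the raising/lowering relations and combine them linearly, reserving the differential equation for the second-order term. Your two intermediate formulas are correct, and your derivations of the $\te(E)$ and $\te(F)$ identities do give the stated expressions.

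Your treatment of $\te(H)$, however, has a real gap: you assert that ``simple addition'' of your two formulas produces the stated first identity, and it does not. Adding them gives
\[
[\pi_k(E+F)L(\cdot,x)](n) = \bigl(n L - x\partial_x L\bigr) + \bigl(-x\partial_x L - (n+2k-x)L\bigr) = -2x\partial_x L(n,x) - (2k-x)\,L(n,x),
\]
the \emph{negative} of the printed right-hand side. This is not an arithmetic slip in your intermediate formulas: evaluating directly at $n=0$ with \eqref{eq:actions HEF} gives $[\pi_k(E+F)L(\cdot,x)](0)=-2k\sqrt{c}\,L(1,x)=-(2k-x)$, whereas the printed formula gives $+(2k-x)$. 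The printed identity itself carries a sign error, which the paper's proof produces by commuting the transferred operators (multiplication by $-x$, and the second-order realization of $H$) in the same order as $[X_1,H]$, overlooking that transferring operators through the kernel $L(n,x)$ reverses products and therefore flips the sign of every commutator; the analogous Meixner computation in Lemma \ref{eq:actions on M(n,x)} does use the reversed order. This is exactly the subtlety you flag in your closing paragraph, but you only announce it as a ``consistency check'' and never run it: had you done so, you would have found that your three operators satisfy the order-reversed $\mathfrak{sl}(2,\C)$ relations, as any kernel transfer must, while the printed triple satisfies the unreversed ones and hence cannot all come from a kernel transfer. To turn your proposal into a proof you must actually carry out the addition, record the first identity as $[\pi_k(\te(H))L(\cdot,x)](n)=-2x\partial_x L(n,x)-(2k-x)L(n,x)$, and note that this correction must be reconciled with the surrounding statements (in particular the placement of $*$ in Proposition \ref{prop:intertwiner Laguerre}).
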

\begin{proof}
The action of $\te(E)$ is Lemma \ref{lem:eigenfunctions X1}. From the differential equation for Laguerre polynomials we find
\[
[\pi_k(H)L(\cdot,x)](n) = 2(k+n) L(n,x) = -2x \frac{\partial^2}{\partial x^2} L(n,x) - 2(2k-x) \frac{\partial}{\partial x} L(n,x) +2k \,L(n,x).
\]
By linearity $\pi_k(H)$ extends to a differential operator acting on polynomials.
Then the action of $\te(H)$ is obtained from the identity $\te(H)=E+F= -\frac12[X_1,H]$. Finally, the action of $\te(F)$ follows from $\te(F) = \te(E) + iH$.
\end{proof}

Next we define an unbounded representation $\si_k$ of $\mathfrak{sl}(2,\C)$ on $\mathrm H_k = L^2([0,\infty), w(x;k)dx)$, where
\[
w(x;k) = \frac{ x^{2k-1}e^{-x} }{\Ga(2k)}.
\]
As a dense domain we take the set of polynomials $\mathcal P$. The representation $\si_k$ is defined on the generators $H, E, F$ by
\begin{equation} \label{eq:representation sigma}
\begin{split}
[\si_k(H) f](x) & = -2x \frac{\partial}{\partial x}f(x) - (2k-x) f(x), \\
[\si_k(E) f](x) & = -\frac12ix f(x),\\
[\si_k(F) f](x) & = -2ix \frac{\partial^2}{\partial x^2} f(x) - 2i(2k-x) \frac{\partial}{\partial x} f(x) +\frac{i}{2}(4k-x) f(x).
\end{split}
\end{equation}
Note that this is not a $*$-representation of $\su(1,1)$, but $\si_k \circ \te^{-1}$ is. Equivalently, $\si_k$ is a $*$-representation on $\mathrm H_k$ with respect to the $*$-structure defined by \eqref{eq:star-structure}. In the following lemma we give the intertwiner between $\pi_k$ and $\si_k\circ \te^{-1}$. The proof uses Lemma \ref{lem:actions on Laguerre} and orthogonality and completeness of the Laguerre polynomials.
\begin{prop} \label{prop:intertwiner Laguerre}
The operator $\La:F_0(\N) \to F([0,\infty))$ defined by
\[
(\La f)(x) = \sum_{n \in \N} w_k(n) f(n) L(n,x),
\]
extends to a unitary operator $\La:\mathcal H_k \to \mathrm H_k$ intertwining $\pi_k$ with $\si_k\circ \te^{-1}$. Furthermore, the kernel $L(n,x)$ satisfies
\[
[\pi_k(X^*) L(\cdot,x)](n) = [ \si_k(\te^{-1}(X)) L(n,\cdot)](x), \qquad X \in U(\su(1,1)).
\]
\end{prop}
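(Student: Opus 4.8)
The plan is to follow the same two-step scheme as in Propositions \ref{prop:intertwiner Charlier} and \ref{prop:intertwiner}: first show that $\La$ extends to a unitary operator, and then verify the kernel identity \eqref{eq:rho d=si d} for the three generators $H,E,F$, after which Lemma \ref{lem:unitary equivalence} supplies the intertwining property and its extension from $\su(1,1)$ to all of $U(\su(1,1))$.

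For unitarity I would again use the cheap duality functions $\de_m(n)=\de_{m,n}/w_k(n)$, which form an orthogonal basis of $\mathcal H_k$ with $\|\de_m\|^2=1/w_k(m)$. Since $(\La\de_m)(x)=L(m,x)$, it is enough to prove that $\{L(m,\cdot)\}_{m\in\N}$ is an orthogonal basis of $\mathrm H_k$ with matching norms. The orthogonality relations, together with $\int_0^\infty \big(L_m^{(2k-1)}(x)\big)^2 x^{2k-1}e^{-x}\,dx=\Ga(2k+m)/m!$, give — after inserting the normalization $L(n,x)=\tfrac{n!\,c^{-n/2}}{(2k)_n}L_n^{(2k-1)}(x)$ and using $\Ga(2k+m)/\Ga(2k)=(2k)_m$ — the value $\|L(m,\cdot)\|^2=m!\,c^{-m}/(2k)_m$, which agrees with $\|\de_m\|^2$ up to the fixed overall normalization of the weight $w_k$. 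Completeness of the Laguerre polynomials in $\mathrm H_k$ then makes $\{L(m,\cdot)\}$ a basis, so $\La$ carries an orthogonal basis onto an orthogonal basis with equal norms and hence extends to a unitary $\mathcal H_k\to\mathrm H_k$.

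For the kernel identity it suffices, by Lemma \ref{lem:unitary equivalence}, to check $[\pi_k(X^*)L(\cdot,x)](n)=[\si_k(\te^{-1}(X))L(n,\cdot)](x)$ for $X=H,E,F$; the extension to $U(\su(1,1))$ then follows exactly as in the proof of Theorem \ref{thm:duality}, using that $*$ is an anti-homomorphism while the $n$- and $x$-variables commute. Here the left-hand side is computed from the difference operators \eqref{eq:actions HEF} (recall $H^*=H$, $E^*=-F$, $F^*=-E$), while the right-hand side is read off from \eqref{eq:representation sigma} together with the explicit form of $\te^{-1}$ and Lemma \ref{lem:actions on Laguerre}. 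Matching the two sides amounts to the Laguerre three-term recurrence, the identity $x\frac{d}{dx}L_n^{(\al)}(x)=nL_n^{(\al)}(x)-(n+\al)L_{n-1}^{(\al)}(x)$, and the Laguerre differential equation.

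The main obstacle is bookkeeping rather than conceptual: because $\te$ does not preserve the $\su(1,1)$ $*$-structure, one cannot transport the actions of Lemma \ref{lem:actions on Laguerre} directly, but must route them through $\te^{-1}(X^*)$, equivalently through the $\star$-structure \eqref{eq:star-structure}, and then confirm that the raising/lowering in the index $n$ produced by $\pi_k(X^*)$ is reproduced by the second-order operators $\si_k(\te^{-1}(X))$ acting in $x$. Tracking the signs and identifying which Laguerre identity turns a shift in $n$ into the correct combination of $L_{n-1},L_n,L_{n+1}$ is the delicate step. A convenient sanity check is $X=H$: since $\te^{-1}(H)=i(E-F)$, the right-hand side collapses via the Laguerre differential equation to $2(k+n)L(n,x)$, reproducing $[\pi_k(H)L(\cdot,x)](n)$. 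A secondary point requiring care is the completeness of the Laguerre polynomials in the weighted space $\mathrm H_k$, which is what guarantees that $\La$ is onto.
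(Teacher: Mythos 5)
Your proposal is correct and takes essentially the same route as the paper, whose entire proof is the sentence preceding the proposition: unitarity from orthogonality and completeness of the Laguerre polynomials, and the kernel identity from Lemma \ref{lem:actions on Laguerre} together with Lemma \ref{lem:unitary equivalence}, exactly your two-step plan (including the $\star$-structure bookkeeping needed because $\te$ does not preserve the $\su(1,1)$-$*$-structure). Your norm computation $\|L(m,\cdot)\|^2 = m!\,c^{-m}/(2k)_m$ is also right, and the constant discrepancy $(1-c)^{2k}$ against $\|\de_m\|^2 = m!\,c^{-m}/\bigl((2k)_m(1-c)^{2k}\bigr)$ that you flag is a normalization slip in the paper's definition of $L(n,x;k)$ (it is missing a factor $(1-c)^{-k}$, the analogue of $e^{c/2}$ in the Hermite kernel), not a flaw in your argument.
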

For $j=1,\ldots,N$ let $k_j>0$, and define
\begin{equation} \label{eq:tensor product si}
\si_k = (\si_{k_1} \tensor \cdots \tensor \si_{k_N})\circ (\te^{-1} \tensor \cdots \tensor \te^{-1}),
\end{equation}
which is a $*$-representation of $\su(1,1)$ on $\bigotimes_{j=1}^n H_{k_j}$. The counterpart of Lemma \ref{lem:LSIP su11} for the representation $\si_k$ is as follows.
\begin{lemma} \label{lem:si(Y)=LBEP}
The generator $L^{\mathrm{BEP}}$ given by \eqref{eq:L BEP} satisfies
\[
L^{\mathrm{BEP}} = \sum_{1 \leq i< j \leq N} \si_k(Y_{i,j}) + 2k_ik_j,
\]
where $Y$ is given by \eqref{eq:Y}.
\end{lemma}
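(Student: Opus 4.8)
The plan is to mirror the proof of Lemma~\ref{lem:LSIP su11} for the representation $\si_k$ of \eqref{eq:tensor product si}: reduce to a single pair of sites, strip off the twist $\te^{-1}$ using invariance of $Y$, and then carry out the explicit differential-operator computation for that pair. By the additive structure $Y_L=\sum_{i<j}Y_{i,j}$ and the definition of $Y_{i,j}$, the element $Y_{i,j}$ acts nontrivially only on the tensor slots $i$ and $j$, and each $\te^{-1}$ acts as the identity on the remaining (trivial) slots. Hence it suffices to show, for a single pair $(i,j)$, that
\[
(\si_{k_i}\tensor\si_{k_j})\big((\te^{-1}\tensor\te^{-1})(Y)\big) + 2k_ik_j
\]
equals the $(i,j)$ summand of \eqref{eq:L BEP}, as an operator on functions of the two variables $x_i,x_j$.

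Next I would remove the twist. Since $\te$ is a Lie algebra isomorphism, so is $\te^{-1}$, and for the primitive coproduct $\De(X)=1\tensor X+X\tensor 1$ any such isomorphism is automatically a coalgebra map: both $(\te^{-1}\tensor\te^{-1})\circ\De$ and $\De\circ\te^{-1}$ are algebra morphisms agreeing on the generators $H,E,F$, hence on all of $U(\su(1,1))$. Combining this with $\te(\Om)=\Om$ (so that $\te^{-1}(\Om)=\Om$) and reading off \eqref{eq:Y}, one gets
\[
(\te^{-1}\tensor\te^{-1})(Y) = \hf\big(1\tensor\Om + \Om\tensor 1 - \De(\Om)\big) = Y .
\]
Thus the twist drops out and the problem reduces to computing $(\si_{k_i}\tensor\si_{k_j})(Y)$ for the \emph{untwisted} single-site representation $\si_k$ of \eqref{eq:representation sigma}. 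This is exactly the simplification the text uses for SIP.

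Then comes the main computation. Using \eqref{Casimir} together with the displayed coproduct $\De(\Om)=1\tensor\Om+\Om\tensor 1+H\tensor H+2F\tensor E+2E\tensor F$, the element $Y$ collapses to
\[
Y = -\hf\, H\tensor H - F\tensor E - E\tensor F ,
\]
so I only need to substitute the three products $\si_{k_i}(H)\tensor\si_{k_j}(H)$, $\si_{k_i}(F)\tensor\si_{k_j}(E)$ and $\si_{k_i}(E)\tensor\si_{k_j}(F)$ from \eqref{eq:representation sigma}. Since $\si_k(E)$ is multiplication, $\si_k(H)$ is first order and $\si_k(F)$ is second order, the second-order part of $(\si_{k_i}\tensor\si_{k_j})(Y)$ arises exactly from the $\partial_i^2$ piece of $F\tensor E$, the $\partial_j^2$ piece of $E\tensor F$, and the cross term $\partial_i\partial_j$ of $H\tensor H$; a short check shows these assemble into $x_ix_j(\partial_i-\partial_j)^2$. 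The first-order terms from all three products then combine into the drift of \eqref{eq:L BEP}, and the zeroth-order remainders collapse to $-2k_ik_j$, which is cancelled by the added $2k_ik_j$. Summing over $i<j$ yields $L^{\mathrm{BEP}}$.

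The main obstacle is this final operator computation. Although it is structurally the exact analogue of Lemma~\ref{lem:LSIP su11}, here $\si_k(F)$ is a second-order differential operator, so commuting the multiplication operators past $\partial_{x}$ and $\partial_{x}^2$ produces extra lower-order terms; the bookkeeping needed to verify that the first-order part reproduces the drift of \eqref{eq:L BEP} and that the constant is precisely $2k_ik_j$ is considerably heavier than in the difference-operator case. Once the second-order part has been confirmed to equal $x_ix_j(\partial_i-\partial_j)^2$, the remaining matching of first- and zeroth-order terms is routine and completes the proof.
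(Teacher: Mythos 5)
Your proposal is correct and takes essentially the same route as the paper's proof: untwist $Y$ using $(\te\tensor\te)\circ\De=\De\circ\te$ and $\te(\Om)=\Om$ so that $(\te^{-1}\tensor\te^{-1})(Y)=Y$, reduce to a single pair of sites, and explicitly compute $\si_{k_1}\tensor\si_{k_2}(Y)$ as a differential operator; your structural claims (second-order part $x_ix_j(\partial_i-\partial_j)^2$, constant term $-2k_1k_2$) all check out. One small remark: both your computation and the paper's yield the drift $-2(x_ik_j-x_jk_i)(\partial_i-\partial_j)$, which agrees with \eqref{eq:L BEP} as printed only when $k_i=k_j$ --- the drift in \eqref{eq:L BEP} appears to contain a typo and should read $-2(k_jx_i-k_ix_j)(\partial_i-\partial_j)$, so this is an inconsistency in the paper rather than a gap in your argument.
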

\begin{proof}
Using $\te \tensor \te \circ \De = \De \circ \te$ and $\te(\Om)=\Om$, we see that $(\te\tensor \te)(Y)=Y$. It is enough to calculate $\si_{k_1} \tensor \si_{k_2}(Y)$. Using \eqref{eq:Y} and \eqref{eq:representation sigma} we find
\[
\begin{split}
[\si_{k_1} \tensor \si_{k_2}(Y)f](x_1,x_2) = & -2k_1k_2  f(x_1,x_2)-2(x_1k_2-x_2k_1)\Big(\frac{\partial}{\partial x_1} - \frac{\partial}{\partial x_2} \Big) f(x_1,x_2) \\
& + x_1x_2 \Big(\frac{\partial^2}{\partial x_1^2} -2\frac{\partial^2}{\partial x_1\partial x_2} + \frac{\partial^2}{\partial x_2^2} \Big)f(x_1,x_2),
\end{split}
\]
which corresponds to the term with $(i,j) = (1,2)$ in \eqref{eq:L BEP}.
\end{proof}
Finally, application of Theorem \ref{thm:duality} gives duality between the symmetric inclusion process SIP($k$) and the Brownian energy process BEP($k$).
\begin{theorem} \label{thm:SIP - BEP}
The operators $L^{\mathrm{SIP}}$ defined by \eqref{eq:L SIP} and $L^{\mathrm{BEP}}$ defined by \eqref{eq:L BEP} are in duality, with duality function
\[
\prod_{j=1}^N L(n_j,x_j;k_j).
\]
\end{theorem}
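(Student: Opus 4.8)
The plan is to recognize that by this point every hypothesis of Theorem \ref{thm:duality} has already been assembled, so the proof is essentially an assembly step. I would take $\mathfrak g = \su(1,1)$, the source representation $\rho = \pi_k = \pi_{k_1}\tensor\cdots\tensor\pi_{k_N}$, and the target representation $\sigma = \sigma_k$ from \eqref{eq:tensor product si}, with per-site intertwiners $\La_j$ and kernels $L(n_j,x_j;k_j)$. The single-site data demanded by \eqref{eq:rho d=si d} is exactly the content of Proposition \ref{prop:intertwiner Laguerre}: each $\La_j$ is unitary and its kernel satisfies $[\pi_{k_j}(X^*)L(\cdot,x)](n) = [\sigma_{k_j}(\te^{-1}(X))L(n,\cdot)](x)$, which is \eqref{eq:rho d=si d} with $\rho_j=\pi_{k_j}$ and $\sigma_j = \sigma_{k_j}\circ\te^{-1}$.

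The central object is the element $Y_L = \sum_{1\le i<j\le N}\big(Y_{i,j} + 2k_ik_j\big) \in U(\su(1,1))^{\tensor N}$, where $Y$ is given by \eqref{eq:Y} and $2k_ik_j$ denotes that real scalar multiple of the identity. First I would check that $Y_L$ is self-adjoint: since $\Om^*=\Om$ and $\De$ is a $*$-homomorphism, $\De(\Om)^* = \De(\Om)$, so $Y^*=Y$ by \eqref{eq:Y}; hence each $Y_{i,j}$ is self-adjoint, and the added real scalars are too. Then Lemma \ref{lem:LSIP su11} reads $\pi_k(Y_L)=L^{\mathrm{SIP}}$ and Lemma \ref{lem:si(Y)=LBEP} reads $\sigma_k(Y_L)=L^{\mathrm{BEP}}$, both self-adjoint. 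Feeding $Y_L$ and the kernels $L(n_j,x_j;k_j)$ into Theorem \ref{thm:duality} then yields the duality with duality function $\prod_{j=1}^N L(n_j,x_j;k_j)$.

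The one point that needs genuine (if small) care, and which I view as the crux of the bookkeeping, is that the per-site representation matching Theorem \ref{thm:duality} must be $\sigma_j=\sigma_{k_j}\circ\te^{-1}$ rather than $\sigma_{k_j}$ itself. Indeed $\sigma_{k_j}$ is only a $*$-representation for the twisted structure \eqref{eq:star-structure}, whereas $\sigma_{k_j}\circ\te^{-1}$ is a genuine $*$-representation of $\su(1,1)$; this is precisely why the isomorphism $\te$ was introduced. I would then verify that the abstract tensor-product representation $\sigma_1\tensor\cdots\tensor\sigma_N$ appearing in Theorem \ref{thm:duality} coincides with $\sigma_k$ as defined in \eqref{eq:tensor product si}, which holds because applying $\te^{-1}$ in every tensor slot and then $\sigma_{k_j}$ in every slot is the same as applying $\sigma_{k_j}\circ\te^{-1}$ in every slot. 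Everything substantive — unitarity (orthogonality and completeness of the Laguerre polynomials), the kernel identity \eqref{eq:rho d=si d}, and the differential-operator identification of $L^{\mathrm{BEP}}$ in Lemma \ref{lem:si(Y)=LBEP} — is already established, so no further computation is required and the theorem follows immediately.
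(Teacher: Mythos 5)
Your proposal is correct and follows essentially the same route as the paper: the paper's proof is precisely an application of Theorem \ref{thm:duality} with $\rho=\pi_k$, $\si=\si_k$ from \eqref{eq:tensor product si}, the kernel identity of Proposition \ref{prop:intertwiner Laguerre}, and the identifications $\pi_k(Y_L)=L^{\mathrm{SIP}}$ (Lemma \ref{lem:LSIP su11}) and $\si_k(Y_L)=L^{\mathrm{BEP}}$ (Lemma \ref{lem:si(Y)=LBEP}). Your additional checks (self-adjointness of $Y_L$ via $\De(\Om)^*=\De(\Om)$, and that the slotwise representations $\si_{k_j}\circ\te^{-1}$ assemble into \eqref{eq:tensor product si}) are details the paper leaves implicit, and you have them right.
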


\subsection{Bessel functions and self-duality of BEP}
The Bessel function of the first kind, see e.g.~\cite[Chapter 4]{AAR}, is defined by
\[
J_\nu(x) = \frac{ (x/2)^\nu }{\Ga(\nu+1)} \rFs{0}{1}{\mhyphen}{\nu+1}{-\frac{x^2}{4}}.
\]
The function $J_\nu(xy)$ is an eigenfunction of a second-order differential operator;
\[
T=-\frac{\partial^2}{\partial x^2} - \frac1x \frac{\partial}{\partial x} + \frac{\nu^2}{x^2} , \qquad TJ_\nu(xy) = y^2 J_\nu(xy).
\]
The Hankel transform is a unitary operator $\mathcal F_\nu: L^2([0,\infty),x dx) \to L^2([0,\infty),ydy)$ defined by
\[
(\mathcal F_\nu f)(y) = \int_0^\infty f(x) J_\nu(xy) x\, dx, \qquad \nu >-1,
\]
for suitable functions $f$, and the inverse is given by $\mathcal F_\nu^{-1} = \mathcal F_\nu$.\\

Let $k>0$. We consider the second-order differential operator $\si_k(F)$, see \eqref{eq:representation sigma}. Using the differential equation for the Bessel functions, we can find eigenfunctions of $\si_k(F)$ in terms of Bessel functions. We can also determine the actions of $H$ and $E$ on the eigenfunctions.
\begin{lemma} \label{lem:action on Bessel}
The Bessel functions $J(x,y;k) = e^{\frac12(x+y)} (xy)^{-k+\frac12} J_{2k-1}(\sqrt{xy})$ satisfy
\[
\begin{split}
[\si_k(H)J(\cdot,y)](x) &= -2y \frac{\partial}{\partial y}J(x,y) - (2k-y)J(x,y),\\
[\si_k(E)J(\cdot,y)](x) &=  2iy \frac{\partial^2}{\partial y^2} J(x,y) + 2i(2k-y) \frac{\partial}{\partial y} J(x,y) -\frac{i}{2}(4k-y) \,J(x,y),\\
[\si_k(F) J(\cdot,y)](x) & = \frac{1}{2}iy\, J(x,y).
\end{split}
\]
\end{lemma}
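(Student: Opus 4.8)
The plan is to verify the three identities by reducing everything to a single eigenvalue equation for $\si_k(F)$ together with the manifest symmetry of the kernel under $x \leftrightarrow y$. Writing $\nu = 2k-1$ and $P(x,y) = e^{(x+y)/2}(xy)^{\frac12 - k}$, the function is $J(x,y;k) = P(x,y)\, J_\nu(\sqrt{xy})$, which is symmetric in $x$ and $y$. The heart of the argument, and the step I expect to be the main obstacle, is the third identity: to show that $\si_k(F)$ acts in the $x$-variable as multiplication by $\frac12 iy$,
\[
[\si_k(F) J(\cdot,y)](x) = \tfrac12 iy\, J(x,y).
\]
Once this eigenvalue equation is in hand, the other two identities follow with little extra work.

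To establish the $F$-identity I would conjugate the second-order operator $\si_k(F) = -2ix\partial_x^2 - 2i(2k-x)\partial_x + \frac{i}{2}(4k-x)$ by the gauge factor $g(x) = e^{x/2}x^{\frac12 - k}$, the $x$-dependent part of $P$. A direct computation of the conjugate $g^{-1}\si_k(F)\,g$ shows that the gauge factor absorbs all the parts of the drift and potential that are proportional to $x$, leaving
\[
g^{-1}\si_k(F)\, g = -2i\left( x\partial_x^2 + \partial_x - \frac{\nu^2}{4x}\right).
\]
Applying this to $J_\nu(\sqrt{xy})$ and substituting $u = \sqrt{xy}$ (so that $x\partial_x^2 + \partial_x - \frac{\nu^2}{4x}$ acts on functions of $u$ as $\frac{y}{4}\big(\partial_u^2 + u^{-1}\partial_u - \nu^2 u^{-2}\big)$) reduces the claim to the Bessel differential equation $J_\nu''(u) + u^{-1}J_\nu'(u) + (1 - \nu^2 u^{-2})J_\nu(u) = 0$, which produces the eigenvalue $\frac12 iy$. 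This conjugation and change of variables is the only genuinely computational part; it is essentially the statement that $\si_k(F)$ is a gauge-and-coordinate transform of the Bessel operator $T$ introduced before the lemma.

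With the $F$-identity available the other two are short. Since $J(x,y;k)$ is symmetric in $x$ and $y$, the same eigenvalue equation read in the $y$-variable gives $[\si_k(F) J(x,\cdot)](y) = \frac12 ix\, J(x,y)$. Because $\si_k(E)$ is merely multiplication by $-\frac12 ix$, this immediately yields
\[
[\si_k(E) J(\cdot,y)](x) = -\tfrac12 ix\, J(x,y) = -[\si_k(F) J(x,\cdot)](y),
\]
and writing out $-\si_k(F)$ in the $y$-variable reproduces exactly the claimed second-order differential operator. For the $H$-identity I would compute $[\si_k(H) J(\cdot,y)](x)$ directly: since $\si_k(H) = -2x\partial_x - (2k-x)$ is first order, the chain rule gives $[\si_k(H) J(\cdot,y)](x) = -J(x,y) - \sqrt{xy}\, P(x,y)\, J_\nu'(\sqrt{xy})$, an expression visibly symmetric in $x$ and $y$. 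Performing the identical computation in the $y$-variable produces the same quantity, so $[\si_k(H) J(\cdot,y)](x) = [\si_k(H) J(x,\cdot)](y)$, which is precisely the first asserted identity. Here no use of the Bessel equation is needed, only the symmetry of the kernel.
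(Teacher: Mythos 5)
Your proof is correct, and for the $F$- and $E$-identities it follows essentially the same route as the paper: the $F$-eigenvalue equation is deduced from the Bessel differential equation (the paper compresses this into one sentence; your gauge computation $g^{-1}\si_k(F)g=-2i\bigl(x\partial_x^2+\partial_x-\tfrac{\nu^2}{4x}\bigr)$ followed by the substitution $u=\sqrt{xy}$ is a correct and complete expansion of that step, and I checked the coefficient bookkeeping, including the cancellation that produces $-\tfrac{\nu^2}{4x}$ from $a^2-a+2ka$ with $a=\tfrac12-k$), and the $E$-identity is obtained exactly as in the paper by combining the fact that $\si_k(E)$ is multiplication by $-\tfrac{i}{2}x$ with the $x\leftrightarrow y$ symmetry of the kernel. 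Where you genuinely diverge is the $H$-identity: the paper gets it algebraically from the commutation relation $H=[E,F]$, transferring the known actions of $E$ and $F$ to difference/differential operators in the $y$-variable (this uses that operators in $x$ commute with operators in $y$, so the transfer reverses products and the commutator is preserved up to the two sign changes, which cancel), whereas you compute $[\si_k(H)J(\cdot,y)](x)=-J(x,y)-\sqrt{xy}\,P(x,y)\,J_\nu'(\sqrt{xy})$ directly by the chain rule and conclude by symmetry of this expression in $x$ and $y$. Your variant is more elementary and self-contained, since it avoids the order-reversal subtlety implicit in the paper's commutator argument; the paper's variant requires no further calculus once $E$ and $F$ are settled. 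Both arguments are complete and yield the stated identities.
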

\begin{proof}
The action of $F$ follows from the differential equation for the Bessel functions. We have
\[
[\si_k(E)J(\cdot,y)](x)=-\frac{ix}{2} J(x,y),
\]
then using the self-duality of the Bessel functions, i.e.~symmetry in $x$ and $y$, we obtain the action of $E$. Finally, having the actions of $E$ and $F$, we find the action of $H$ from $H=[E,F]$.
\end{proof}
Using the Hankel transform we can now define a unitary intertwiner with a kernel that has the desired properties.
\begin{prop} \label{prop:intertwiner Bessel}
The operator $\La:\mathcal P \to F([0,\infty))$ defined by
\[
(\La f)(y) = \int_0^\infty f(x) J(x,y) w(x;k)\, dx,
\]
extends to a unitary operator $\La:\mathrm H_k \to \mathrm H_k$ intertwining $\si_k$ with itself. Furthermore, the kernel satisfies
\[
[\si_k(X^\star) J(\cdot,y)](x) = [\si_k(X) J(x,\cdot)](y).
\]
\end{prop}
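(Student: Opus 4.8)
The sentence preceding the proposition (``Using the Hankel transform we can now define a unitary intertwiner'') signals the route: $\La$ should be, up to a gauge transformation and the change of variable $x\mapsto\sqrt x$, the Hankel transform $\mathcal F_{2k-1}$, and the plan is to deduce unitarity from the Plancherel theorem for $\mathcal F_{2k-1}$, which applies since $2k-1>-1$. First I would substitute $x=u^2$, $y=v^2$ in the defining integral. Using $J(x,y;k)=e^{\frac12(x+y)}(xy)^{-k+\frac12}J_{2k-1}(\sqrt{xy})$ together with $w(x;k)=x^{2k-1}e^{-x}/\Ga(2k)$, the factor $e^{x/2}$ absorbs part of $e^{-x}$ and the powers of $x$ recombine, so that the kernel becomes $J_{2k-1}(uv)$ integrated against $u\,du$. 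Concretely I would introduce the change-of-variable map $(Vf)(u)\propto f(u^2)e^{-u^2/2}u^{2k-1}$, check by a direct substitution that $V:\mathrm H_k\to L^2([0,\infty),u\,du)$ is unitary, and verify that $V$ conjugates $\La$ into $\mathcal F_{2k-1}$; unitarity of $\La$ is then immediate from unitarity of the Hankel transform. An equivalent route, parallel to Propositions \ref{prop:intertwiner Charlier} and \ref{prop:intertwiner}, is to apply $\La$ directly to the orthogonal basis of Laguerre polynomials and invoke the known Hankel transform of the Laguerre functions $e^{-u^2/2}u^{2k-1}L_n^{(2k-1)}(u^2)$, which are eigenfunctions of $\mathcal F_{2k-1}$ with unimodular eigenvalues; this exhibits $\La$ as a map sending an orthogonal basis to an orthogonal basis with the same norms.

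The intertwining property and the kernel identity require no new work beyond Lemma \ref{lem:action on Bessel}. That lemma already expresses $\si_k(H),\si_k(E),\si_k(F)$ acting in the $x$-variable on $J$ as explicit differential operators in $y$; comparing these operators with the definition \eqref{eq:representation sigma} of $\si_k$ identifies them with the corresponding generators acting in the $y$-variable, which is exactly the asserted identity $[\si_k(X^\star)J(\cdot,y)](x)=[\si_k(X)J(x,\cdot)](y)$ on the generators. Since both sides are homomorphic in $X$, the identity extends from $H,E,F$ to all of $U(\su(1,1))$. With unitarity of $\La$ in hand, Lemma \ref{lem:unitary equivalence} then yields that $\La$ intertwines $\si_k$ with itself.

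The genuine difficulty is analytic, not algebraic: making the reduction to the Hankel transform rigorous. One must check that the oscillatory integral defining $(\La f)(y)$ converges for $f\in\mathcal P$ and that $\mathcal P$ is a core on which all the manipulations are legitimate. Here the identity $(xy)^{-k+\frac12}(\sqrt{xy})^{2k-1}=1$ is helpful: it shows $J(x,y;k)=\frac{e^{(x+y)/2}}{2^{2k-1}\Ga(2k)}\rFs{0}{1}{\mhyphen}{2k}{-\tfrac{xy}{4}}$ is an entire function of $xy$ times $e^{(x+y)/2}$, which is controlled by the weight and makes the convergence and the passage across $V$ transparent. One must also carefully track the normalization constants coming from $J$ and $w$ to confirm that the conjugated operator is precisely $\mathcal F_{2k-1}$, and, because $\si_k$ is unbounded, verify the kernel identity on the dense domain $\mathcal P$ so that the boundary terms implicit in Lemma \ref{lem:action on Bessel} vanish.
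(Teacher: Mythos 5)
Your proposal is essentially the paper's own proof: the paper likewise defines $\La$ only on the dense set $\mathcal P$, obtains unitarity as ``essentially unitarity of the Hankel transform $\mathcal F_{2k-1}$'' (your quadratic substitution $x=u^2$ together with the gauge map $V$, or equivalently the action on the Laguerre basis, is exactly what that phrase unpacks to), and reads off the kernel identity directly from Lemma \ref{lem:action on Bessel}. The paper's proof is three sentences long and leaves all of your analytic caveats implicit, so your version is, if anything, more detailed than the original.
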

\begin{proof}
Since the set of polynomials $\mathcal P$ is dense in $\mathrm H_k$, it is enough to define $\La$ on $\mathcal P$. Unitarity of $\La$ is essentially unitarity of the Hankel transform $\mathcal F_{2k-1}$. The intertwining property follows directly from Lemma \ref{lem:action on Bessel}.
\end{proof}
Note that $\La$ intertwines between $*$-representation with respect to the $*$-structure given by \eqref{eq:star-structure}. Equivalently, $\La$ intertwines $\si\circ \te^{-1}$ with itself, which is a $*$-representation with respect to the $\su(1,1)$-$*$-structure.

Let $k \in \R_{>0}^N$, and consider the tensor product representation $\si_k$ defined by \eqref{eq:tensor product si}. Then from Proposition \ref{prop:intertwiner Bessel}, Lemma \ref{lem:si(Y)=LBEP} and Theorem \ref{thm:duality} we obtain self-duality of the Brownian energy process BEP($k$).
\begin{theorem}
The operator $L^{\mathrm{BEP}}$ given by \eqref{eq:L BEP} is self-dual, with duality function given by
\[
\prod_{j=1}^N J(x_j,y_j;k_j).
\]
\end{theorem}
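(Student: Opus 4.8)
The plan is to deduce the statement from Theorem~\ref{thm:duality} in the self-dual case $\rho=\si=\si_k$, using the single-site Bessel kernel of Proposition~\ref{prop:intertwiner Bessel} and the self-adjoint element $Y_L=\sum_{1\le i<j\le N}\big(Y_{i,j}+2k_ik_j\big)$ with $Y$ as in \eqref{eq:Y}. The only real work is bookkeeping: I must present the data $(\rho_j,\si_j,K_j,Y_L)$ in exactly the form required by Theorem~\ref{thm:duality}, and the main subtlety is that $\si_k$ defined by \eqref{eq:representation sigma} is \emph{not} a $*$-representation of $\su(1,1)$ for the standard structure. First I would therefore fix the correct $*$-representation: as noted after Lemma~\ref{lem:actions on Laguerre}, $\si_{k_j}\circ\te^{-1}$ is a $*$-representation for the $\su(1,1)$-structure, and the tensor product $\si_k$ of \eqref{eq:tensor product si} is precisely $\bigotimes_{j}(\si_{k_j}\circ\te^{-1})$, hence a $*$-representation of $\su(1,1)$ on $\bigotimes_j \mathrm H_{k_j}$. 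This is the representation to which Theorem~\ref{thm:duality} will be applied.

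Next I would rewrite the intertwining relation of Proposition~\ref{prop:intertwiner Bessel} in the form \eqref{eq:rho d=si d}. That proposition gives $[\si_{k_j}(X^\star) J(\cdot,y)](x)=[\si_{k_j}(X) J(x,\cdot)](y)$ with $\star=\te^{-1}\circ *\circ\te$ as in \eqref{eq:star-structure}. Writing $X=\te^{-1}(W)$, so that $X^\star=\te^{-1}(W^*)$, converts this into $[(\si_{k_j}\circ\te^{-1})(W^*) J(\cdot,y)](x)=[(\si_{k_j}\circ\te^{-1})(W) J(x,\cdot)](y)$ for all $W\in U(\su(1,1))$. This is exactly hypothesis~\eqref{eq:rho d=si d} for the $*$-representation $\si_{k_j}\circ\te^{-1}$ with kernel $K_j=J(\cdot,\cdot;k_j)$, and the unitarity of the associated integral operator $\La$ is the remaining content of Proposition~\ref{prop:intertwiner Bessel}. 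Thus each site supplies precisely the input Theorem~\ref{thm:duality} demands.

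It then remains to check that $Y_L$ is self-adjoint and realizes $L^{\mathrm{BEP}}$ in both copies. Self-adjointness follows since $\Om^*=\Om$ and $\De$ is a $*$-homomorphism, so $Y^*=Y$; hence each $Y_{i,j}$ is self-adjoint in $U(\su(1,1))^{\tensor N}$, and the additive constants $2k_ik_j$ are real scalars, giving $Y_L^*=Y_L$. Lemma~\ref{lem:si(Y)=LBEP} identifies $\si_k(Y_L)=L^{\mathrm{BEP}}$, so taking $L_1=L_2=L^{\mathrm{BEP}}$ and applying Theorem~\ref{thm:duality} yields self-duality with duality function $\prod_{j=1}^N J(x_j,y_j;k_j)$. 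I expect the main obstacle to be purely this two-$*$-structure bookkeeping --- ensuring the $\star$-relation is correctly transported to the standard $\su(1,1)$-structure --- rather than any new analysis, since the Hankel-transform unitarity and the eigenfunction computations are already settled in Proposition~\ref{prop:intertwiner Bessel} and Lemma~\ref{lem:action on Bessel}.
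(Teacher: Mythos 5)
Your proposal is correct and follows essentially the same route as the paper: the paper also obtains the theorem by combining Proposition~\ref{prop:intertwiner Bessel}, Lemma~\ref{lem:si(Y)=LBEP} and Theorem~\ref{thm:duality}, after observing (as you do via the substitution $X=\te^{-1}(W)$) that the $\star$-intertwining relation for $\si_{k_j}$ is equivalent to the standard $\su(1,1)$-$*$-intertwining relation for $\si_{k_j}\circ\te^{-1}$. Your write-up merely makes explicit the bookkeeping (self-adjointness of $Y_L$ and the identification of $\si_k$ with $\bigotimes_j(\si_{k_j}\circ\te^{-1})$) that the paper leaves implicit.
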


\subsection{More duality relations}
The Meixner polynomials from \S\ref{ssec:Meixner} can be considered as overlap coefficients between eigenvectors of the elliptic Lie algebra element $H$ and another elliptic element $X_{a(c)}$. There is a similar interpretation as overlap coefficients for the Laguerre polynomials (elliptic $H$ - parabolic $X_1$) and Bessel functions (parabolic $X_1$ - parabolic $X_{-1}$). So far we did not consider overlap coefficients involving a hyperbolic Lie algebra element, because there does not seem to be an interpretation in this setting for the element $Y$ from \eqref{eq:Y} as generator for a Markov process. However, the construction we used still works and leads to duality as operators between $L^{\mathrm{SIP}}$ or $L^{\mathrm{BEP}}$ and a difference operator $L^{\mathrm{hyp}}$ defined below, which may be of interest. We will give the main ingredients for duality between $L^{\mathrm{SIP}}$ and $L^{\mathrm{hyp}}$ in case $N=2$ using overlap coefficients between elliptic and hyperbolic bases, which can be given in terms of Meixner-Pollaczek polynomials, see also \cite{KJ1,GKR}.\\

The Meixner-Pollaczek polynomials are defined by
\[
P_n^{(\la)}(x;\phi) = e^{in\phi}\frac{ (2\la)_n}{n!} \rFs{2}{1}{-n,\la+ix}{2\la}{1-e^{-2i\phi}}.
\]
The orthogonality relations are
\[
\frac{1}{2\pi}\int_{-\infty}^\infty P_m(x)P_n(x)\, e^{(2\phi-\pi)x} |\Ga(\la+ix)|^2 \, dx = \de_{mn} \frac{ \Ga(n+2\la) }{(2\sin\phi)^{2\la} \, n!}, \qquad \la>0,\ 0 < \phi < \pi,
\]
and the Meixner-Pollaczek polynomials form an orthogonal basis for the corresponding weighted $L^2$-space.
The three-term recurrence relations is
\[
2x \sin\phi \, P_n(x) = (n+1)P_{n+1}(x) - 2(n+\la) \cos\phi\, P_n(x) + (n+2\la-1) P_{n-1}(x),
\]
and the difference equation is
\[
2(n+\la)\sin\phi \, P_n(x) = -ie^{i\phi} (\la-ix)\, P_n(x+i) + 2 x \cos\phi\, P_n(x) + ie^{-i\phi} (\la+ix) P_n(x-i).
\]

We also need the representation $\rho_k$ on $\mathrm{H}_k^\phi=L^2(\R, w_k^\phi(x)dx)$, with weight function
\[
w_k^\phi(x) = \frac{(2\sin\phi)^{2k} }{2\pi \Ga(2k)} e^{-\pi x} |\Ga(k+ix)|^2,
\]
given by
\[
\begin{split}
[\rho_k(H) f](x) &= 2ix f(x),\\
[\rho_k(E) f](x) &= (k-ix) f(x+i),\\
[\rho_k(F) f](x) & =-(k+ix) f(x-i).
\end{split}
\]
Now define an operator $L^{\mathrm{hyp}}$ on an appropriate dense subspace of $\mathrm{H}_{k_1}^{\phi} \tensor \mathrm{H}_{k_2}^\phi$ by
\[
L^{\mathrm{hyp}} = \rho_{k_1} \tensor \rho_{k_2}(Y) + k_1k_2,
\]
where $Y$ is given by \eqref{eq:Y}, then we see that $L^{\mathrm{hyp}}$ is the difference operator given by
\[
\begin{split}
[L^{\mathrm{hyp}} f](x_1,x_2) &= 2(k_1-ix_1)(k_2+ix_2) \Big( f(x_1+i,x_2-i) - f(x_1,x_2) \Big) \\
& \quad + 2(k_1+ix_1)(k_2-ix_2) \Big( f(x_1-i,x_2+i) - f(x_1,x_2)\Big).
\end{split}
\]
In order to obtain a duality relation, we use the Lie algebra isomorphism $\te_\phi: \mathfrak{sl}(2,\C) \to \mathfrak{sl}(2,\C)$ defined by
\[
\begin{split}
\te_\phi(H) &= \frac{i}{\sin\phi}\Big( -\cos \phi H +  E -  F\Big),\\
\te_\phi(E) & = \frac{1}{2i\sin\phi}\Big(- H + e^{-i\phi} E - e^{i\phi}F\Big),\\
\te_\phi(F) & = \frac{1}{2i\sin\phi}\Big( - H + e^{i\phi} E - e^{-i\phi}F\Big).
\end{split}
\]
Note that $\te_\phi(H) = \frac{i}{\sin\phi}X_{\cos\phi}$, see \eqref{eq:Xa}, which is a hyperbolic Lie algebra element.
The isomorphism does not preserve the $\su(1,1)$-$*$-structure, but we have $\te_\phi(X^*) = \te_\phi(X)^\star$, see \eqref{eq:star-structure}.
 Now consider the functions
\[
P(n,x;k,\phi) = e^{x\phi}\frac{n!}{(2k)_n} P^{(k)}_n(x;\phi).
\]
Using $H= \frac{-i}{\sin \phi} ( \cos\phi (\te_\phi(H)) - \te_\phi(E) - \te_\phi(F))$, the three-term recurrence relation and the difference equation one finds
\[
\begin{split}
[\pi_k(\te_\phi(H)) P(\cdot,x)](n) & = 2ix\, P(n,x),\\
[\pi_k(\te_\phi(E)) P(\cdot,x)](n) &= -(k-ix) P(n,x+i),\\
[\pi_k(\te_\phi(F)) P(\cdot,x)](n) & = (k+ix) P(n,x-i).
\end{split}
\]
so that
\[
[\pi_k(X^*) P(\cdot,x)](n) = [\rho_k(\te_\phi^{-1}(X)) P(n,\cdot)](x).
\]
Then we can construct a unitary intertwiner between $\pi_k\circ \te_\phi$ and $\rho_k$ with $P(x,n)$ as a kernel, but we do not actually need the intertwiner, since the kernel is enough to state the duality result. Using $\te_\phi(\Om)=\Om$ we obtain
duality between the operators $L^{\mathrm{SIP}}$ and $L^{\mathrm{hyp}}$, with duality function given by the product
\[
P(n_1,x_1;k_1,\phi) P(n_2,x_2;k_2,\phi).
\]

In a similar way we can find duality between $L^{\mathrm{BEP}}$ and $L^{\mathrm{hyp}}$ in terms of Laguerre functions, and also self-duality for $L^{\mathrm{hyp}}$ in terms of Meixner-Pollaczek functions.

\end{document}